\newtheorem{ithm}{Theorem}[section]
\newtheorem{icor}[ithm]{Corollary}
\newtheorem{theorem}{Theorem}[section]
\newtheorem{proposition}[theorem]{Proposition}
\newtheorem{definition}{Definition}
\newtheorem{lemma}[theorem]{Lemma}
\theoremstyle{definition}
\newtheorem{question}{Question}
\DeclareMathOperator{\ga}{\textsl{g}}
\DeclareMathOperator{\Ricci}{Ric}
\def\rm#1{\mathrm{#1}}
\def\cal#1{\mathcal{#1}}
\def\bb#1{\mathbb{#1}}
\newcommand{\halmos}{\hfill $\;\;\;\Box$\\}
\newcommand{\h}{\frac{1}{2}}
\DeclareMathOperator{\scal}{scal}
\renewcommand{\d}[1]{\ensuremath{\operatorname{d}\!{#1}}}
\def\rm#1{\mathrm{#1}}
\def\cal#1{\mathcal{#1}}
\def\bb#1{\mathbb{#1}}
\begin{document}

\title{A direct approach to prescribing scalar curvature on bundles}

\author{Leonardo F. Cavenaghi}
\address{Instituto de Matemática, Estatística e Computação Científica -- Unicamp, Rua Sérgio Buarque de Holanda, 651, 13083-859, Campinas, SP, Brazil}
\email{leonardofcavenaghi@gmail.com}
	%----------Author 2
	\author{Llohann D. Speran\c ca}
	\address{Instituto de Ciência e Tecnologia -- Unifesp, Avenida Cesare Mansueto Giulio Lattes, 1201,  12247-014, São José dos Campos, SP, Brazil}
	\email{speranca@unifesp.br}
		\thanks{Part of this work was developed while the first named author was a Ph.D. student at IME-USP financed by FAPESP 2017/24680-1. The second named author was financially supported by CNPq 131875/2016-7, 404266/2016-9.}
		\keywords{Fiber Bundles, Compact Structure Group, Exotic manifolds, Prescribed Scalar Curvature, Calabi--Yau manifolds.}
	
	\begin{abstract}
This note intends to demonstrate how to discuss scalar curvature functions' admissibility on bundles by directly applying some of the Kazdan--Warner results. Proofs of the concept include determining which functions are realizable as scalar curvature functions on the total space of several bundles over exotic manifolds. In addition, we employ traditional variational methods to provide reasonable conditions to smooth functions on the total spaces of some fiber bundles to be realized as scalar curvature functions for some Riemannian submersions metrics. We apply these last results to bundles over Calabi--Yau manifolds.
	\end{abstract}
\maketitle

\section{Introduction}

Understanding admissible geometric properties of some topological spaces has been a prolific way to comprehend the topology of the underlined manifold, see \cite{brendle2010ricci,perelman1,perelman2,perelman2002entropy}). However, the converse question, \textit{given a class of smooth manifolds, which are the admissible geometries on this class?} remains unsolved for almost every manifold. Moreover, there are compelling examples of exotic spheres $\Sigma^n$, first introduced By J. Milnor in \cite{mi}, which are homeomorphic to the standard sphere $S^n$ but not diffeomorphic, which adds relevance to this question.

 Concerning exotic spheres, much work has been done establishing the geometric properties of these manifolds. In the realm of positive/non-negative sectional curvature, we cite \cite{gromoll1974exotic,wilhelm-lots,gz,shankarannals,abresch2007wiedersehen,dearricott20117,grove2011exotic}; while for metrics of positive Ricci curvature we refer to \cite{nash1979positive, poor1975some, searle2015lift,wraith1997,wraith2007new,crowley2017intermediate, crowley2017positive, SperancaCavenaghiPublished, cavenaghi2019positive}).
 However, it is unknown if an exotic sphere with a metric of positive sectional curvature exists. More drastically, Hitchin proved that there are exotic spheres that do not even admit metrics of positive scalar curvature (see \cite{hitchin1974harmonic}). It is natural to ask
 \begin{question}\label{q:general}
To what extent does the smooth structure determine/obstruct the geometry?
 \end{question}

A first step in the direction of answering Question \ref{q:general} is the problem of \emph{which functions can be realized as the scalar curvature for some Riemannian metric on a closed connected manifold}. It had a significant development in the seminal work of Kazdan and Warner (see, e.g., \cite{kazdaninventiones, kazadanannals,kazdan1975}). On the other hand, despite furnishing general results, sometimes it conveys to produce explicit examples of which manifolds admit a given class of smooth functions as the scalar curvature of metrics there defined.

In this paper, we approach natural generalizations of this problem, namely:
\begin{enumerate}[(i)]
    \item Given a fiber bundle $F\hookrightarrow M \to B$ with compact total space and structure group and a smooth function $f: M \to \mathbb{R},$ can $f$ be realized as the scalar curvature of some Riemannian metric on $M$?
    \item Given a smooth $G$-principal bundle $G\hookrightarrow P\rightarrow B$ and a smooth $G$-invariant function $f : P \to \mathbb{R}$, can $f$ be realized as the scalar curvature of some $G$-invariant metric on $P$?
\end{enumerate}

 Submersions usually serve as a manufactory tool (mainly when interested in positive curvatures) when concerning constructing examples of Riemannian manifolds. In particular, many examples of exotic manifolds can be built as bases for certain submersions, see for instance \cite{SperancaCavenaghiPublished}. This paper deals with principal bundles and general fiber bundles with compact structure groups, examples of submersions. Collecting the fibers for such submersions, we arrive that their total spaces are regarded with a foliation (when fibers are connected).

 We convention now that the fibers of the here-treated submersions are named \emph{leaves}. Moreover, when there is no chance of confusion, a submersion shall be considered a pair $(M,\cal F)$, being $ M $ the total space and $ \cal F $ the collection of fibers.

\begin{ithm}\label{thm:scalar}
	Let $F \hookrightarrow M \stackrel{\pi}{\to} B$ be a fiber bundle where $M, F, B$ and the structure group $G$ are compact and connected. Assume that:
	\begin{enumerate}[$(i)$]
	    \item A principal orbit for the action of $G$ on $F$ has finite fundamental group,
	    \item $F$ carries a $G$-invariant metric such that $\Ricci_{F^{reg}/G}\geq 1.$
	    \end{enumerate}
	    Then
\begin{enumerate}[$(a)$]
    \item There is $\lambda \in (0,1]$, depending only on the geometry of the fiber, such that any smooth function $f: M \to \mathbb{R}$ satisfying $\dfrac{\min_{p\in M}f}{\max_{p\in M}f} \leq \lambda$ is the scalar curvature for some Riemannian metric on $M$, except maybe if $f = \mathrm{constant} \geq 0;$
    \item If $F$ has constant scalar curvature, then any smooth function $f : M \to \mathbb{R}$ is the scalar curvature of some Riemannian metric on $M$, except maybe if $f = \mathrm{constant} \geq 0;$
\end{enumerate}
\end{ithm}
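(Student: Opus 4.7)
The plan is to reduce the theorem to Kazdan--Warner's prescription results by first exhibiting a Riemannian submersion metric on $M$ whose scalar curvature is positive with a geometry-only oscillation bound. The main geometric input is a $G$-invariant positive scalar curvature metric on the fiber, built from the two hypotheses; the main analytic input is Kazdan--Warner's conformal deformation machinery applied to $M$.

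Hypotheses (1) and (2) together constitute the familiar setup of Lawson--Yau type positive scalar curvature constructions for $G$-manifolds: a principal orbit has finite fundamental group, hence admits a normal homogeneous metric of positive scalar curvature, while the regular quotient $F^{reg}/G$ carries $\Ricci\geq 1$. A Cheeger deformation shrinking the $G$-direction combines these two ingredients into a $G$-invariant metric $g_F$ on $F$ with $\mathrm{scal}(g_F)\geq s_0>0$ everywhere. Since the structure group $G$ is compact one may write $M=P\times_G F$ for some principal $G$-bundle $P\to B$; fixing any principal connection on $P$ and any metric $g_B$ on $B$, the associated bundle construction produces a family of metrics $g_t$ on $M$ rescaling the fiber by $t^{2}$. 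The O'Neill submersion formula then gives
\[
\mathrm{scal}(g_t)=t^{-2}\,\mathrm{scal}(g_F)+\mathrm{scal}(g_B)\circ\pi-|A|^{2}+O(t^{2}),
\]
where $A$ is the O'Neill tensor of the connection. For $t$ small enough the fiber term dominates uniformly, so $\mathrm{scal}(g_t)>0$ on $M$, and the oscillation of $\mathrm{scal}(g_t)$ is bounded by a constant $\mu$ depending only on the geometry of $(F,g_F)$.

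With such a metric in hand, one invokes Kazdan--Warner. The existence of a positive scalar curvature metric on $M$ implies, by the main prescription theorem, that every smooth $f$ that is negative somewhere, and every non-constant $f$ positive somewhere, is the scalar curvature of some Riemannian metric on $M$, the only candidate for non-admissibility being non-negative constants (as controlled by the Yamabe invariant). For item (2), when $g_F$ itself has constant scalar curvature, a careful choice of connection (for instance, one whose curvature has constant norm on the base) together with a Yamabe-type rescaling yields a $g_t$ with constant positive scalar curvature, and the prescription theorem then applies in its strongest form: every $f$ is realizable except possibly a non-negative constant. For item (1), where $\mathrm{scal}(g_F)$ varies, the quantitative form of Kazdan--Warner's conformal deformation theorem associates to the oscillation $\mu$ a pinching constant $\lambda=\lambda(\mu)\in(0,1]$ such that every $f$ with $\min f/\max f\leq\lambda$ is realized as the scalar curvature of a metric conformal to $g_t$; since $\mu$ depends only on $(F,g_F)$, so does $\lambda$.

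The principal obstacle lies in Step~1: extracting $G$-invariant positive scalar curvature on all of $F$ from the Ricci estimate on the regular quotient alone. Cheeger deformation is globally defined and $G$-equivariant, but its scalar curvature estimates must be verified uniformly across the singular strata of the action, where the principal orbit type degenerates. A secondary technical point is the constancy of $\mathrm{scal}(g_t)$ needed in part~(2); promoting constancy on the fiber to constancy on the total space requires choosing the connection and the base metric compatibly, followed by a conformal correction along the base.
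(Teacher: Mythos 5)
Your overall strategy---equip the fiber with a $G$-invariant positively curved metric, form the Riemannian submersion metric with totally geodesic fibers, shrink the fibers so that the term $e^{-2t}\mathrm{scal}_F$ dominates, and feed the resulting metric into Kazdan--Warner---is exactly the paper's. The divergences are precisely where the proposal has problems. First, the fiber metric: you correctly flag the construction of a $G$-invariant positive scalar curvature metric on $F$ from hypotheses (1)--(2) as the principal obstacle, but you leave it unresolved, and a bare Cheeger deformation does not close it (near singular orbits the principal orbit type degenerates and the deformation alone yields no uniform lower bound; moreover Lawson--Yau requires a non-abelian group, which is not among the hypotheses). The paper closes this step by quoting Theorem A of Searle--Wilhelm \cite{searle2015lift}, whose hypotheses are exactly (1) and (2) and whose conclusion is a $G$-invariant metric of positive \emph{Ricci} curvature on $F$---more than is needed, and proved there via Cheeger deformations combined with partial conformal changes. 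You should cite that result rather than attempt to reprove it.

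Second, the quantitative step and part (2). As $t\to-\infty$ the additive oscillation of $\mathrm{scal}(g_t)$ blows up like $e^{-2t}\bigl(\max\mathrm{scal}_F-\min\mathrm{scal}_F\bigr)$; what is controlled is the \emph{ratio}, which converges to $\lambda=\min_F\mathrm{scal}_{g_F}/\max_F\mathrm{scal}_{g_F}$. This ratio is the paper's $\lambda$, since Kazdan--Warner's criterion $c\min f<\mathrm{scal}(g_t)<c\max f$ is satisfiable exactly when $\min f/\max f$ lies below $\min_M\mathrm{scal}(g_t)/\max_M\mathrm{scal}(g_t)$. So your ``$\lambda=\lambda(\mu)$ from the oscillation $\mu$'' should be replaced by this explicit ratio. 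Part (2) then requires no construction at all: constancy of $\mathrm{scal}_F$ gives $\lambda=1$ and the pinching condition becomes vacuous. Your detour through a metric of \emph{constant} scalar curvature on $M$ (via a connection ``whose curvature has constant norm'' plus a Yamabe rescaling) is both unnecessary and unjustified---such a connection need not exist. Finally, your appeal to the full Kazdan--Warner trichotomy (psc on $M$ implies every $f$ is realizable) is a strictly stronger tool than the ``$c\min f<\mathrm{scal}<c\max f$'' criterion the paper uses; if you invoke it you must justify $\dim M\geq 3$ and, again, the psc metric on $M$, and it would render the $\lambda$-condition in part (1) superfluous, which is not how the theorem is stated or proved.
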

We observe that whenever $G$-acts effectively on a smooth manifold $F$; there is an open and dense subset $F^{reg}\subset F$ such that every two points have conjugate isotropy subgroups. Given a $G$-invariant metric $\ga_F$ on $F$, the induced metric in $F^{reg}/G$, which happens to be a manifold under reasonable conditions, is named \emph{the orbit distance metric}. The hypotheses of Theorem \ref{thm:scalar} are natural because many examples can be built, as in \cite{searle2015lift}.

Theorem \ref{thm:scalar} is related to the Question \ref{q:general} in the following manner: Eells and Kuiper in \cite{ek} computed the number of $7$ (respectively $15$)-exotic spheres realized as total spaces of sphere bundles. Therefore, by setting $G = O(n+1),~n = 7,~15, F = S^n$, Theorem \ref{thm:scalar} enlightens Question \ref{q:general}:
\begin{theorem}[Grou--Rigas]\label{thm:grourigas}
 16 (resp. $4.096$) from the $28$ (resp. $16.256$) diffeomorphisms classes of the 
$7$-dimensional (resp. $15$)-exotic spheres are such that any real smooth function
$f : \Sigma^7 \to \mathbb{R}$ (resp. $f : \Sigma^{15} \to \mathbb{R})$ is the scalar curvature for some Riemannian metric on  $\Sigma^7$ (resp. $\Sigma^{15}),$ except maybe if $f = \mathrm{constant} \geq 0$.
\end{theorem}

Theorem \ref{thm:scalar} is a natural generalization of the following Theorem in \cite{mariaAlice} (and Theorem A in \cite{rigascalar}), from which Theorem \ref{thm:grourigas} also follows:
\begin{theorem}[Theorem A in \cite{rigascalar}]\label{thm:rigasgrou}
	Let $F \hookrightarrow M \stackrel{\pi}{\to} B$ be a fiber bundle with $M, F, B$, and structure group $G$ compact. Assume that:
	\begin{enumerate}[$(i)$]
	    \item $F$ carries a $G$-invariant metric with positive sectional curvature;
	    \item The action of $G$ ($\dim G \geq 2)$ on $F$ has only one orbit type.
	\end{enumerate}
	Then there exists $\lambda \in (0,1)$ such that any smooth function $f : M\to \mathbb{R}$ satisfying $\frac{\min f}{\max f} \leq \lambda$ is the scalar curvature for some Riemannian metric on $M$, except maybe if $f = \mathrm{constant} \geq 0.$
\end{theorem}

Theorem \ref{thm:exemplos1} below generalizes Theorem \ref{thm:grourigas} to several classes of bundles over exotic spheres and some connected sums, further achieving our goal, which concerns building examples. To state it well, we recall that sometimes specifying the base manifold for certain bundles is convenient. For this sake, we adopt the following convention: $F\hookrightarrow P\times_G F\to B$ denotes the associated bundle with fiber $F$ to the $G$-principal bundle $G\hookrightarrow P\to B$.

The constructions in Theorem \ref{thm:exemplos1} are generalizations of realizations of exotic spheres first obtained in Gromoll--Meyer \cite{gm}. These examples here provided are firstly presented in \cite{SperancaCavenaghiPublished} as a \emph{common ground} for \cite{DRS,gromoll1974exotic,sperancca2016pulling}: The manifolds to be considered are a geometric realization of `exotic' manifolds through isometric quotients of principal bundles over `standard' manifolds, being the realization incarnated in the form of a cross-diagram:
	\begin{equation}\label{eq:CDintro}
	\begin{xy}\xymatrix{& G\ar@{..}[d]^{\bullet} & \\ G\ar@{..}[r]^{\star} &  P\ar[d]^{\pi}\ar[r]^{\pi'} &M'\\ &M&}\end{xy}
	\end{equation}
 Usually, $M$ stands to a classic (standard) manifold and $M'$ to an exotic realization. Former instances of diagram \eqref{eq:CDintro} are found in \cite{duran2001pointed,DPR,DRS,speranca2016pulling}, mainly focused on the differential topology of exotic spheres. Unique properties of diagram \eqref{eq:CDintro} allow one to compare the geometries of $M$ and $M'$ (Proposition \ref{prop:preciso}).

\begin{ithm}\label{thm:exemplos1}
	Let $\Sigma^7$ and $\Sigma^8$ be any homotopy spheres of dimensions 7 and 8, respectively; $\Sigma^{10}$ be any homotopy 10-sphere which bounds a spin manifold;  $\Sigma^{4m{+}1},\Sigma^{8m+5}$ be Kervaire spheres of dimensions $4m{+}1,8m{+}5$, respectively. Then, there are explicit $G$-manifolds $P$ such that any smooth function $f$ (except possibly if $f = \mathrm{constant} \geq 0$) is the scalar curvature for some Riemannian  metric on the total space of the following bundles 
\begin{eqnarray*}
 S^{8r{+}k}\hookrightarrow P\times_GS^{8r{+}k}\rightarrow (M^{8r+k}\times N^{5-k})\#\Sigma^{8r+5},\\ 
	  \  S^l \hookrightarrow P\times_{G}S^l \rightarrow \Sigma^l, \\
	\     S^l\hookrightarrow P\times_{G}S^l\rightarrow M^l\#\Sigma^l 
			\end{eqnarray*}
	for $l=7,8,10,4m{+}1,~k= 0,1$, where
	\begin{enumerate}[$(i)$]
	    \item $M^7$ is any 3-sphere bundle over $S^4$,
	    \item $M^8$ is either a 3-sphere bundle over $S^5$ or a 4-sphere bundle over $S^4$,
	    \item $M^{10}=M^8\times S^2$ with  $M^8$  as in item $(ii)$,
     	\item  $M^{10}$ is any 3-sphere bundle over $S^7$, 5-sphere bundle over $S^5$ or 6-sphere bundle over $S^4$,
     	\item $M^{4m+1}\#\Sigma^{4m+1}$ where \label{item:5intro} 
	\end{enumerate}
	\begin{enumerate}[$(a)$]
				\item $S^{2m}\hookrightarrow M^{4m+1}\to S^{2m+1}$ is the sphere bundle associated to any  multiple\footnote{That is, a bundle whose transition function $\alpha: S^{n-1}\to G$ is a multiple of $\tau_{2m}: S^{2m}\to O(2m+1)$, $\tau_m^\bb C: S^{2m}\to U(m)$ or $\tau_m^\bb H: S^{4m+2}\to Sp(m)$, for $G=O(2m), U(m+1)$ or $Sp(m)$, the transition functions of the orthonormal frame bundle and its reductions, respectively.} of $O(2m{+}1)\hookrightarrow O(2m{+}2)\to S^{2m+1}$, the frame bundle of $S^{2m+1}$
				\item $\bb C \rm P^{m}\hookrightarrow M^{4m+1}\to S^{2m+1}$ is the $\bb C\rm P^m$-bundle associated to any  multiple of the bundle of unitary frames $U(m)\hookrightarrow U(m+1)\to S^{2m+1}$
				\item $M^{4m+1}=\frac{U(m+2)}{SU(2)\times U(m)}$	
	        	\item $N^{5-k}$ is any manifold with positive Ricci curvature and
	        	\begin{enumerate}[(d.i)]
				\item $S^{4r+k-1}\hookrightarrow M^{8r+k}\to S^{4r+1}$ is the $k$-th  suspension of the unitary tangent $S^{4r-1}\hookrightarrow T_1S^{4r{+}1}\to S^{4r+1}$,
				\item for $k=1$, $\bb H\rm P^{m}\hookrightarrow M^{8m+1}\to S^{4m+1}$ is the $\bb H\rm P^m$-bundle associated to any  multiple of $Sp(m)\hookrightarrow Sp(m+1)\to S^{4m+1}$
				\item for $k=0$, $M=\frac{Sp(m+2)}{Sp(2)\times Sp(m)},$
				\item for $k=1$, $M=M^{8m+1}$ is as in item  $(\ref{item:5intro}).$
				\end{enumerate}
			\end{enumerate}
\end{ithm}

\begin{ithm}\label{thm:exemplos2}
For the same manifolds considered in Theorem \ref{thm:exemplos1}, there is $\lambda \in (0,1)$ such that any smooth real function $f$ (except maybe if $f = \mathrm{constant} \geq 0$) defined in
			\begin{enumerate}[$(i')$]
			\item $P\times_G\Sigma^l,$
			    \item $P\times_GM^l\#\Sigma^l,$
			    \item $P\times_G(M^{8r+k}\times N^{5-k})\#\Sigma^{8r+5}$
			\end{enumerate}
			and it satisfies 
   \begin{equation*}\frac{\min f}{\max f} \leq \lambda,\end{equation*} is the scalar curvature of some Riemannian metric on the total space of the following bundles
			\begin{eqnarray*}
   	  \ (M^{8r+k}\times N^{5-k})\#\Sigma^{8r+5}\hookrightarrow P\times_G(M^{8r+k}\times N^{5-k})\#\Sigma^{8r+5}\rightarrow (M^{8r+k}\times N^{5-k})\#\Sigma^{8r+5},\\
			\    \Sigma^l \hookrightarrow P\times_G\Sigma^l \to \Sigma^l,\\
	\    M^l\#\Sigma^l\hookrightarrow P\times_{G}M^l\#\Sigma^l\rightarrow M^l\#\Sigma^l,
			\end{eqnarray*}
\end{ithm}
It is important to stress that $\lambda$ in the statement of Theorem \ref{thm:exemplos2} only depends on the geometry of a fixed Riemannian metric on the manifolds used as fibers on the associated bundles in the hypothesis. More precisely, the former Theorems \ref{thm:exemplos1} and \ref{thm:exemplos2} follow from the next results:
	\begin{ithm}[=Theorem \ref{thm:ajuda1}]\label{ithm:ajuda1}
	Let $M$ be a standard sphere with a metric of constant sectional curvature. Then any smooth function $f: P\times_GM\to \mathbb{R}$ (except maybe if $f = \mathrm{constant} \geq 0$) is the scalar curvature of some Riemannian metric on the total space of $M \hookrightarrow P\times_GM \to M'$ if, and only if, it is the scalar curvature of some Riemannian metric on the total space of $M \hookrightarrow P\times_GM \to M$.
	\end{ithm}
 \begin{ithm}[=Theorem \ref{thm:ajuda2}]\label{ithm:ajuda2}
Any smooth function $f: P\times_GM \to \mathbb{R}$ can be identified with a smooth function on $ P\times_GM'$. Moreover, if $f$ satisfies
 \[\frac{\min_{P\times_GM'} f}{\max_{P\times_GM'}f} \leq \frac{\min_{M'}\mathrm{scal}_{\ga'}}{\max_{M'}\mathrm{scal}_{\ga'}},\] then it is the scalar curvature of some Riemannian metrics on $P\times_GM$ and $P\times_GM'$, except maybe if $f = \mathrm{constant} \geq 0$.
 \end{ithm}

In contrast to the former discussion, we recall that in a fiber bundle $\pi:\overline M\to M$, a partial conformal change, called \textit{general vertical warping}, is in place. Given a \emph{basic} function $\phi:\overline M\to \bb R$, the general vertical warping defined by $\phi$ is defined by multiplying by $e^{2\phi}$ the component of the metric tangent to the fibers (see the Appendix \ref{sec:appendix} for further details). Common examples of fiber bundles are vector bundles and warped products, which are often non-compact. We recall that the problem of prescribing scalar curvature of complete metrics on non-compact manifolds is still open. Considering this, we use traditional variational methods to prove Theorem \ref{ithm:warped} below, which extends the work of Ehrlich--Yoon-Tae--Kim \cite{ehrlich1996}. In particular, we realize a great range of basic scalar curvature functions as warped products, including the case of non-compact fibers. There, the authors focus on prescribing constant scalar curvature.

Given a fiber bundle $(\overline M,\cal F)$, a smooth function $f: \overline M \rightarrow \mathbb{R}$ is said to be \emph{basic} if it is constant along the leaves of $\cal F$. Whenever we regard $(\overline M,\cal F)$ with a Riemannian submersion metric $\overline {\textsl g}$ with totally geodesic fibers for which each leaf has constant scalar curvature, the scalar curvature function $\mathrm{scal}_{\overline {\textsl g}}$ is basic, this justifies our assumptions of the admissible functions in the following:

\begin{ithm}\label{ithm:warped}
	Let $(M,\textsl{h})$ be a closed $n$-dimensional Riemannian manifold and $F$ be a $k$-dimensional manifold with a (complete) metric $\ga_F$ with constant scalar curvature $c$. Given a basic smooth function $f : M\times F\rightarrow \mathbb{R}$, assume one of the following conditions:
	\begin{enumerate}[$(1)$]
		\item $c> 0$ and $\scal_{\textsl{h}}>0$;
		\item $c \leq 0$ and $f : M\times F\to \mathbb R$ satisfies the condition: 
		\begin{gather*}
			- \left(\dfrac{k+1}{8k}\right)\left(f-\mathrm{scal}_{\textsl{h}}\right) + \left(\dfrac{(k+1)^2}{8(k-1)k}\right)c\left(\mathrm{vol}(M)\right)^{\tfrac{2}{k-1}} \geq 0.
		\end{gather*} 
	\end{enumerate}
	Then, 
	\begin{enumerate}[(a)]
	 \item 	if $|c| > 0$ there are $\mu_1,\mu_2>0$ and a basic smooth function $\phi : M \rightarrow \mathbb{R}$ for which the warpd product metric $\bar{\ga}_{\phi}$ obtained from $(M\times F,\mu_1\textsl{h}{\times}\mu_2\ga_F)$ satisfies $\scal_{\bar{\ga}_{\varphi}}(x,y)=f(x)$. In particular, if $M$ admits a metric with constant scalar curvature $c'$, $M\times F$ admits a warped product metrics with scalar curvature $c'$.
	 \item 	If $c =0$ there exists $A\geq 0$ such that the previous conclusion holds to $f+A$.
	\end{enumerate}
\end{ithm}
Despite stating in this manner, we prove a more general statement to Theorem \ref{ithm:warped} indeed, ensuring that the result holds for Riemannian submersions which are local products, changing the word ``warped product metrics'' on the thesis in the former statements, to ``general vertical warping metrics''. See Proposition \ref{thm:maingeral}.

A family of manifolds fundamental to studying scalar and Ricci curvature consists of the \textit{Calabi--Yau} manifolds. We call a compact K\"ahler manifold {Calabi--Yau} if its first Chern class vanishes, or, equivalently if it admits a Ricci flat K\"ahler metric. For simplicity, we call such a metric Calabi--Yau. To the authors' knowledge, Theorem \ref{ithm:warped} is the first result of prescribing scalar curvature functions on non-compact fiber bundles over Calabi--Yau manifolds. On the other hand, Tosatti--Zhang \cite{tosatti} proved that holomorphic submersions from compact Calabi--Yau manifolds are trivial, up to cover. We combine Theorem \ref{ithm:warped} with this result of Tosatti--Zhang to obtain:

\begin{icor}\label{icor:CY}
	Suppose $\pi:X\to Y$ is a holomorphic submersion with connected fiber $F$ satisfying one of the following:
	\begin{enumerate}
		\item $X,Y$ are projective manifolds with $X$ Calabi--Yau;
		\item $X,Y$ are compact K\"ahler manifolds; $Y,F$ are Calabi--Yau; and either $b_1(F)=0$ or $b_1(Y)=0$ and $F$ is a torus.
	\end{enumerate}
	If $f: Y\to \bb R$ is a scalar curvature function in $Y$, then there is a metric $\tilde{\ga}$ on $X$ whose scalar curvature is $f\circ\pi$. Moreover, if $\ga$ is a Calabi--Yau metric on $Y$, $\tilde{\ga}$ can be chosen so that $\pi$ is a Riemannian submersion over $\ga$.
\end{icor}

\section*{Terminology and notations}

	We denote by $R_{\ga}$ the Riemannian tensor of the metric $\ga$: 
	\[R_{\ga}(X,Y)Z=\nabla_X\nabla_YZ-\nabla_Y\nabla_XZ-\nabla_{[X,Y]}Z,\]
	where $\nabla$ stands for the Levi-Civita connection of $\ga$. So the adopted convetion to the sctional curvature of $\ga$ is $K_{\ga}(X, Y)=\ga(R_{\ga}(X, Y)Y, X)$. We sometimes denote $R_{\ga}(X, Y) = R_{\ga}(X, Y, Y, X),$ making it clear in the context, the unreduced sectional curvature of $\ga$. We adopt the standard decomposition notation $TM = \cal V\oplus \cal H$ for any foliated manifold $M$, being $\cal V$ the subbundle collecting tangent spaces to the leaves of the foliation $\cal F$ and $\cal H$ to some subbundle complementary choice in $TM$.
	
	When appearing as superscripts, $\mathcal{V}$ and $\mathcal{H}$ denote the projection of the underlined quantities on such subbundles, $\cal V$ and $\cal H$, respectively. Whenever we say we have a \emph{Riemannian principal bundle}, we mean that the principal bundle is considered with a Riemannian submersion metric. We recall that $A_XY:= \h[X,Y]^{\ker \d \pi}$ stands for the \emph{O'Neill integrability tensor} and, fixed any $X\in \cal H$, we denote by $A^*_X$ its $\ga$-dual. Fixed $X\in \cal H$, we denote by $S_X$ the \emph{shape operator} of the submersion $\pi$. Namely, if $\sigma : \ker d\pi \times \ker \d\pi \rightarrow \cal H$ denotes the \emph{second fundamental form} associated with $\pi$, then $S_X$ is the $\ga$-dual
\[\ga(S_X(\cdot),\cdot) = \ga(\sigma(\cdot,\cdot),X).\]

\section{Prescribing scalar curvature on fiber bundles with compact structure group}

\label{sec:compactgroup}

In this section, we prove Theorems \ref{thm:scalar} and \ref{thm:exemplos1}. To the proof of Theorem \ref{thm:scalar}, we make a simple and straight application of the following result by Kazdan and Warner:
\begin{theorem}[Theorem A in \cite{kazdaninventiones}]\label{thm:kdz}
Let $(M,g)$ be a compact manifold. Denote by $\mathrm{scal}_g$ the scalar curvature of $g$. Let $f\in C^{\infty}(M)$ be a smooth function on $M$. If there exists a constant $c > 0$ such that
\begin{equation}
    \min cf < \mathrm{scal}_g(p) < \max cf,~\forall p\in M,
\end{equation}
then there exists a Riemannian metric $\widetilde g$ on $M$ such that
\[\mathrm{scal}_{\widetilde g} = f.\]
\end{theorem}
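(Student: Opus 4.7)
The plan is to apply the classical conformal-deformation strategy of Kazdan and Warner: seek the target metric within the conformal class of the given $g$, so that the geometric problem reduces to a single semilinear elliptic PDE on the fixed background $(M,g)$.

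First I would reduce the hypothesis to the case $c=1$. Since scalar curvature rescales as $\mathrm{scal}_{\lambda g} = \lambda^{-1}\mathrm{scal}_g$, replacing $g$ by $c^{-1}g$ turns the assumption into $\min f < \mathrm{scal}_g(p) < \max f$ for every $p\in M$, without altering the statement to be proved; so I may assume $c=1$.

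Next, writing a candidate conformal metric in dimension $n=\dim M\geq 3$ as $\widetilde g = u^{4/(n-2)}g$ with $u>0$, the scalar curvature obeys
\[\mathrm{scal}_{\widetilde g} = u^{-(n+2)/(n-2)}\left(-\frac{4(n-1)}{n-2}\Delta_g u + \mathrm{scal}_g\, u\right),\]
so $\mathrm{scal}_{\widetilde g} = f$ is equivalent to producing a positive smooth $u$ satisfying the Yamabe-type equation
\[-\frac{4(n-1)}{n-2}\Delta_g u + \mathrm{scal}_g\, u = f\, u^{(n+2)/(n-2)}.\]

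To solve this, I would apply the method of sub- and supersolutions. The strict two-sided bound $\min f < \mathrm{scal}_g < \max f$ furnishes open regions on which $f > \mathrm{scal}_g$ (where constants slightly less than $1$ are subsolutions) and on which $f < \mathrm{scal}_g$ (where constants slightly greater than $1$ are supersolutions); after a preliminary diffeomorphism of $M$ realigning the level sets of $\mathrm{scal}_g$ with those of $f$, one can paste these into an ordered pair $u_{-} < u_{+}$, and then a smooth positive solution in between follows by standard elliptic regularity. The main technical obstacle is the Yamabe-critical nonlinearity $u^{(n+2)/(n-2)}$, which defeats the naive variational attack through loss of compactness in the Sobolev embedding, and the exceptional dimension $n=2$ must be handled separately via the associated Liouville equation. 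Since the statement is exactly the classical Kazdan--Warner theorem cited in the bibliography, in practice the proof here consists in invoking their result directly.
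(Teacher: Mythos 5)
Your bottom line agrees with the paper: the paper offers no proof of this statement at all, writing only ``See [Theorem A, Kazdan--Warner, Inventiones]'', so deferring to the citation is exactly what the authors do. In that sense the proposal is acceptable as used in this paper.

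However, the proof sketch you attach is not the argument of the cited reference, and as written it has a genuine gap. The two-sided hypothesis $\min cf < \mathrm{scal}_g < \max cf$ forces $f - \mathrm{scal}_g$ (after normalizing $c=1$) to change sign in general, so a constant slightly below $1$ is a subsolution of $-\tfrac{4(n-1)}{n-2}\Delta_g u + \mathrm{scal}_g u = f u^{(n+2)/(n-2)}$ only on the open set where $f > \mathrm{scal}_g$, and a constant slightly above $1$ is a supersolution only on the complementary region; there is no globally ordered pair, and ``pasting after a preliminary diffeomorphism realigning the level sets'' is not a construction --- that realignment is itself the hard step, and it is used quite differently in the actual proof. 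Kazdan and Warner's \emph{direct approach} avoids the conformal Yamabe-critical equation entirely: they prove (i) an approximation lemma stating that if $\min f < h < \max f$ pointwise then $f\circ\phi$ can be made arbitrarily close to $h$ in $L^p$ ($p>n$) by a suitable diffeomorphism $\phi$, and (ii) a perturbation/inverse-function-theorem result showing that any function sufficiently $L^p$-close to $\mathrm{scal}_g$ is itself the scalar curvature of a nearby metric; combining these with $h=\mathrm{scal}_g$ realizes $f\circ\phi$, hence $f$ after pulling back the metric by $\phi^{-1}$. If you intend the proof to be ``invoke the reference'', as the paper does, you should drop the sub/supersolution sketch or replace it with this outline; as it stands the sketch would not close.
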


\begin{proof}[Proof of Theorem \ref{thm:scalar}]
The hypotheses of the theorem are the same as Theorem A in \cite{searle2015lift}, ensuring that $F$ carries a $G$-invariant Riemannian metric $g_F$ with positive Ricci curvature. Given any Riemannian metric $g_B$ on $B$, consider on $M$ the unique Riemannian submersion metric $g$ such that its fibers are totally geodesic submanifolds isometric to $(F,g_F)$ (see \cite[Proposition 2.7.1, p. 97]{gw}). We reinforce that, in the case of principal bundles, this metric can be made $G$-invariant by defining a Kaluza-Klein $G$-invariant metric on the total space (see \cite[Proposition 5.1, p. 27]{SperancaCavenaghiPublished}).

Fix $p\in M$ and let $\{e_i\}_{i=1}^k$ be an orthonormal base for $\mathcal H_p$ and $\{e_j\}_{j=k+1}^n$ be an orthonormal base for $\mathcal V_p.$ Note that $\{e_i\}_{i=1}^k\cup \{e^{-t}e_j\}_{j=k+1}^n$ is a $g_t$-orthonormal base to $T_pM.$ Using the formulae given by Proposition \ref{prop:curvatures} we obtain an expression for the scalar curvature of $g_t$:
\begin{equation*}
  \mathrm{scal}_t(p) = \mathrm{scal_t}^{\mathcal H}(p) + 2e^{2t}\sum_{i,j}|A^*_{e_i}e_j|^2_g + e^{-2t}\mathrm{scal}_F(p).
\end{equation*}
Moreover,
\begin{equation*}
  \mathrm{scal_t}^{\mathcal H}(p) = \mathrm{scal}_B(p)(1-e^{2t}) + e^{2t}\mathrm{scal}_g^{\mathcal H}(p).
\end{equation*}

Denote by $s_t := \min_{p\in M}\mathrm{scal}_t(p)$ and by $S_t := \max_{p\in M}\mathrm{scal}_t(p).$ Then note that
\begin{equation*}
  \lambda := \lim_{t\to-\infty}\frac{s_t}{S_t} = \frac{\min_{p\in M}\mathrm{scal}_F}{\max_{p\in M}\mathrm{scal}_F} \leq 1.
\end{equation*}
If $f$ is not constant neither change sign, for each $\lambda' < \lambda$ one can find $t > 0$ such that
\[\lambda' =: \frac{\min f}{\max f} < \frac{s_{-t}}{S_{-t}} \leq 1,\]
so the result follows from Theorem \ref{thm:kdz}. Indeed, since $S_{-t} > 0$ we have $s_{-t} > \lambda'S_{-t}.$ Therefore,
\[S_{-t} \geq \mathrm{scal}_t \geq s_{-t} > \frac{\min f}{\max f}S_{-t}.\]
Hence, if $\min f > 0$ one gets directly
\begin{equation*}
  \max f > \mathrm{scal}_t > \min f.
\end{equation*}
On the contrary, if $\max_M f < 0$, then $f$ is always negative so changing $\min f$ to $-\min f$ and $\max f$ to $-\max f$ one gets
\begin{equation}
 -\max f > \mathrm{scal}_t > -\min f
\end{equation}
and the result follows similarly since $\max f$ is a negative number, say $-d$, but $\min f$ is another negative number, for instance, $-e$. Taking $c = 1$ ensures the result.

The result is standard for the case where $f$ is a negative constant since every negative constant is admissible as the scalar curvature for a Riemannian metric on any closed manifold, see Theorem C in \cite{kazdaninventiones}. \qedhere
\end{proof}

We now prove Theorems \ref{thm:exemplos1} and \ref{thm:exemplos2}. Our approach here is quite general, being the examples described in more detail in Section \ref{sec:examples}. Our constructions rely on the ones in \cite{cavenaghi2019positive, SperancaCavenaghiPublished}.	

Consider a compact connected principal bundle $G\hookrightarrow P \to M$ with a principal action $\bullet.$ Assume that there is another action on $P$, which we denote by $\star$,  that commutes with $\bullet$. This makes $P$ a $G\times G$-manifold. If one assumes that $\star$ is free, one gets a \textit{$\star$-diagram} of bundles:
	\begin{equation}\label{eq:CD}
	\begin{xy}\xymatrix{& G\ar@{..}[d]^{\bullet} & \\ G\ar@{..}[r]^{\star} & P\ar[d]^{\pi}\ar[r]^{\pi'} &M'\\ &M&}\end{xy}
	\end{equation}
	In \eqref{eq:CD}, $M$ is the quotient of $P$ by the $\bullet$-action and $M'$ is the quotient of $P$ by the $\star$-action. 
	
	Once $\bullet$ and $\star$ commute, $\bullet$ descends to an action on $M'$ and $\star$ descends to an action on $M$. We denote the orbit spaces of these actions by $M'/\bullet$ and $M/\star$, respectively. Corollary 5.2 in \cite{SperancaCavenaghiPublished} implies that one can choose a Riemannian metric $g'$ on $M'$ such that the orbit spaces $M/\star$ and $M'/\bullet$ are isometric, as metric spaces. Furthermore, it can be shown that the orbits of the $\star$-action on $M$ have a finite fundamental group if, and only if, the orbits of the $\bullet$-action does \cite[Theorem 6.4]{SperancaCavenaghiPublished}. This implies that if the $G$-manifold $M$ satisfies the hypotheses of $F$ in Theorem \ref{thm:scalar}, then $M'$ also does. We state it as a proposition for further reference:
 \begin{proposition}\label{prop:preciso}
     Let $M\leftarrow P \rightarrow M'$ be a diagram of bundles such as \eqref{eq:CD}. Then $M$ satisfies the hypotheses of $F$ in Theorem \ref{thm:scalar} if, and only if, $M'$ does so. That is, there exists a $G$-invariant metric $\ga$ for which $G$ acts on $M$ as $G$ acts on $F$ as in the hypotheses of Theorem \ref{thm:scalar} if, and only if, there exists a $G$-invariant metric $\ga'$ on $M'$ for which the same holds.
 \end{proposition}

	The idea of considering diagrams like \eqref{eq:CD} is two-fold: $M$ and $M'$ can be taken as homeomorphic manifolds that are not diffeomorphic to each other. Moreover, their roles can be interchanged to constructed fiber bundles (associated fiber bundles in this case), taking either $M$ or $M'$ to play the role of $F$ in Theorem $A$. More precisely, one can consider the following associated bundles
	\begin{enumerate}
\item The associated bundle $M \hookrightarrow P\times_GM \to M$ to $\pi : P\to M,$
\item The associated bundle $M' \hookrightarrow P\times_GM' \to M$ to $\pi : P \to M,$
\item The associated bundle $M \hookrightarrow P\times_GM \to M'$ to $\pi' : P\to M',$
\item The associated bundle $M' \hookrightarrow P\times_GM' \to M'$ to $\pi' : P\to M'$.
\end{enumerate}

 Recall that if $G\hookrightarrow P\rightarrow B$ is a principal $G$-bundle, and $F$ is a smooth manifold with an effective smooth action (on the left) by $F$, one can define an action of $G$ on $P\times F$ as:
\begin{equation}\label{eq:starassociated}g\ast (p,f) := (pg,g^{-1}f),~(p,f)\in P\times F\end{equation}
The associated fiber bundle $E=P\times_G F$ is defined then as a quotient of $P\times F$ by the relation $\sim$:
$$E = P\times F / \sim,$$
that is, it corresponds to the orbit space for the $\ast$-action given in equation \eqref{eq:starassociated}. Therefore, if $f: P\times M \to \mathbb{R}$ is a smooth $G$-invariant function for the $\ast$-action, it admits restrictions to the total spaces of $M \hookrightarrow P\times_GM \to M$ and $M \hookrightarrow P\times_GM \to M'$. Moreover, once on Theorem \ref{thm:exemplos1}, the manifolds $M$ are always standard spheres with metrics of constant sectional curvature; for functions obtained in this way, one has the following:
	\begin{theorem}\label{thm:ajuda1}
	Let $M$ be a standard sphere with a metric of constant sectional curvature. Then any smooth function $f: P\times_GM\to \mathbb{R}$ (except maybe if $f = \mathrm{constant} \geq 0$) is the scalar curvature of some Riemannian metric on the total space of $M \hookrightarrow P\times_GM \to M'$ if, and only if, it is the scalar curvature of some Riemannian metric on the total space of $M \hookrightarrow P\times_GM \to M$.
	\end{theorem}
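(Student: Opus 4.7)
The plan is to exploit the fact that the two fiber bundles in the statement share the same total space, the manifold $P\times_G M$, so that the asserted equivalence will be essentially a triviality once the identification is properly set up. Indeed, the two bundle structures correspond to the two principal $G$-bundle structures on $P$ coming from the commuting $\bullet$ and $\star$ actions, and the associated-bundle construction $P\times_G M$, viewed as a smooth manifold, does not depend on which of the two principal structures one starts from; it is the common quotient of $P\times M$ by the diagonal $G$-action using the fiber action on $M$, and the residual $\bullet$- and $\star$-structures on $P$ descend to give the two distinct fiber bundle projections onto $M$ and $M'$.

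Once this identification of total spaces is in place, the ``if and only if'' will follow essentially tautologically: a Riemannian metric on $P\times_G M$ is simply a Riemannian metric on a smooth manifold, and its scalar curvature is a well-defined function on that manifold, independent of the chosen fiber bundle projection. So $f$ will be realizable as a scalar curvature ``through $M$'' exactly when it is realizable ``through $M'$''. No Kazdan--Warner argument is actually needed at this stage; it is simply the observation that both statements are statements about the manifold $P\times_G M$ alone.

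Moreover, since $M$ is a standard sphere of constant sectional curvature, the fiber has constant scalar curvature, and (by the remarks immediately preceding the statement) its regular-orbit space supports a metric of Ricci curvature $\geq 1$ with principal orbit having finite fundamental group. Hence Theorem \ref{thm:scalar}(2) applies to both bundles, and both sides of the equivalence will in fact hold for every smooth $f$ that is not a nonnegative constant, strengthening the equivalence to a common positive assertion.

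The hard part will be to verify rigorously that the two associated bundles genuinely share the same total space (or, more cautiously, are naturally diffeomorphic), so that a function $f : P\times_G M \to \mathbb{R}$ can legitimately serve as input to both statements at once. I would reduce this to tracing through the diagram \eqref{eq:CD} and checking that the commutation of $\bullet$ and $\star$ makes the two natural diagonal $G$-actions on $P\times M$ (one using $\bullet$ on the $P$-factor, the other using $\star$) yield canonically diffeomorphic quotients, each of which then admits two distinct projections realizing the two fiber bundle structures in the statement. This is precisely the kind of quotient manipulation exploited in \cite{SperancaCavenaghiPublished, cavenaghi2019positive}, so I would adapt their arguments to close this gap.
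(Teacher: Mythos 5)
Your primary route -- identify the two total spaces and conclude that the biconditional is a tautology about a single manifold -- rests on a claim you assert and then immediately flag as ``the hard part'' without closing it, and the justification you offer for it is incorrect. The two associated bundles are \emph{not} ``the common quotient of $P\times M$ by the diagonal $G$-action'': the bundle over $M$ is the quotient of $P\times M$ by the diagonal action built from $\bullet$ on the $P$-factor, while the bundle over $M'$ uses $\star$ on the $P$-factor. These are two different $G$-actions on $P\times M$, and for a general fiber $F$ the quotients $P\times_\bullet F$ and $P\times_\star F$ are genuinely non-diffeomorphic -- taking $F$ a point yields $M$ versus $M'$, which in the intended examples are exotic partners and hence not diffeomorphic. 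So ``same total space, hence tautology'' cannot be the proof without a substantive lemma that you have not supplied, and the commutativity of $\bullet$ and $\star$ alone does not supply it.

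The paper's argument is the one you relegate to your third paragraph. Since $M$ is a round sphere, the fiber of \emph{both} bundles is the same $G$-manifold $M$ with constant scalar curvature, and the hypotheses of Theorem \ref{thm:scalar} concern only the fiber and its $G$-action, not the base or the principal structure. Hence Theorem \ref{thm:scalar}(2) applies to each bundle separately and shows that every smooth $f$ (other than possibly a nonnegative constant) is realizable as a scalar curvature on \emph{each} total space; the ``if and only if'' holds because both sides are true, with no identification of total spaces doing any mathematical work. You have all the ingredients for this, but you present it as a strengthening rather than as the proof; promote that paragraph to the main argument (and verify the orbit hypotheses of Theorem \ref{thm:scalar} for the $\star$-action descended to $M$, as in \cite{SperancaCavenaghiPublished}), and drop the reliance on the unproven total-space identification.
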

 \begin{proof}
Note that to build the associated bundle $P\times_GM\rightarrow M$  we are considering $\pi : P\rightarrow M$ and so, the $\bullet$-action on $P$ and the $\star$-action on $M$. Similarly, to build the associated bundle $P\times_GM\rightarrow M'$, we are considering it as an associated bundle to $\pi': P\rightarrow M'$, and so, we see both $P$ and $M$ with the $\star$-action (being such action on $M$ the restricted by the projection $\pi$ from $P$). In any case, we have a bijection between the ring of smooth functions on $P\times_GM$, independently of which base this total space fiber over, with the ring of smooth functions on $P$ that are $G\times G$-invariant: Once the $\bullet$ and $\star$-actions commute on $P$, we can see it as a $G\times G$-manifold and invariant functions on $M$ can be lifted to $G\times G$-invariant functions on $P$, that can also be projected to $G$-invariant functions on $M'$.

Since $M$ is always taken as fiber and fits in the item $(b)$ of the thesis in Theorem \ref{thm:scalar}, we conclude the result. \qedhere
 \end{proof}

 According to Proposition \ref{prop:preciso}, the roles of $M$ and $M'$ can be interchanged as fibers for associated bundles to each $\pi: P\rightarrow M$ and $\pi': P\rightarrow M'$. Using once more that the $\bullet$ and $\star$-actions commute on $P$, we can see $P$ as a $G\times G$-manifold, so requiring that a smooth function $f$ is invariant under the $\bullet$ action on $M'$ and by the $\star$ action on $M$ is equivalent to require that such a smooth function is invariant by the $G\times G$-action on $P$. More precisely, we have an equivariant ring homomorphism of smooth functions:
\begin{equation}
C^{\infty}_{G\times G}(P) \cong C^{\infty}_{G}(M)\cong C^{\infty}_{G}(M').
\end{equation}
However, just as remarked in the proof of the former result, we also have $C^{\infty}(P\times_GF) \cong C^{\infty}_{G\times G}(P)$ for $F = M, M'$. Hence, we can indiscriminately identify any smooth function $f: P\times_GM \to \mathbb{R}$ with a smooth function $f: P\times_GM'\to \mathbb{R}$. More interesting, one may wonder whether admissible scalar curvature functions to Riemannian metrics on $P\times_GF$ can be constrained by admissible functions on $M$ or $M'$. This is the content of the following theorem. 
	\begin{theorem}\label{thm:ajuda2}
	Any smooth function $f: P\times_GM \to \mathbb{R}$ can be identified with a smooth function on $ P\times_GM'$. Moreover, if $f$ satisfies
 \[\frac{\min_{P\times_GM'} f}{\max_{P\times_GM'}f} \leq \frac{\min_{M'}\mathrm{scal}_{\ga'}}{\max_{M'}\mathrm{scal}_{\ga'}},\] then it is the scalar curvature of some Riemannian metrics on $P\times_GM$ and $P\times_GM'$, except maybe if $f = \mathrm{constant} \geq 0$.
	\end{theorem}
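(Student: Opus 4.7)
My plan is to deduce the statement from Theorem \ref{thm:scalar}(1) applied separately to each of the total spaces $P\times_G M$ and $P\times_G M'$, exploiting the symmetry built into the $\star$-diagram \eqref{eq:CD}. I would first invoke Corollary 5.2 and Theorem 6.4 of \cite{SperancaCavenaghiPublished} to observe that if $M$, under the $\star$-action, satisfies hypotheses (1)--(2) of Theorem \ref{thm:scalar}, then so does $(M',g',\bullet)$, and that the orbit spaces $M/\star$ and $M'/\bullet$ are isometric as metric spaces; this installs the machinery of Theorem \ref{thm:scalar} on both sides of the diagram simultaneously.

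Next I would apply Theorem \ref{thm:scalar}(1) to the associated bundle $M'\hookrightarrow P\times_G M'\to M$ coming from $\pi\co P\to M$, taking fiber $(M',g')$. Tracing the proof of Theorem \ref{thm:scalar}, the constant it produces is
\[
\lambda_{M'}=\frac{\min_{M'}\mathrm{scal}_{g'}}{\max_{M'}\mathrm{scal}_{g'}},
\]
which is exactly the quantity appearing in the hypothesis on $f$. Since the identifying homeomorphism between $P\times_G M$ and $P\times_G M'$ is a bijection of point sets, it preserves $\min f$ and $\max f$; hence the first conclusion, that $f$ is realized as the scalar curvature of some Riemannian metric on $P\times_G M'$, follows directly.

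For the second conclusion I would run the symmetric argument on $M\hookrightarrow P\times_G M\to M'$ coming from $\pi'\co P\to M'$, with fiber $M$ carrying the Kaluza--Klein metric $g$ compatible with $g'$ (as in Proposition 5.1 of \cite{SperancaCavenaghiPublished}, already used in the proof of Theorem \ref{thm:scalar}). This yields a constant $\lambda_M=\min_M\mathrm{scal}_g/\max_M\mathrm{scal}_g$, and I would need to show $\lambda_M\geq\lambda_{M'}$ so that the same bound on $\min f/\max f$ still suffices to apply Theorem \ref{thm:scalar}(1) on $P\times_G M$. The hard part is precisely this comparison of ratios: beyond the mere isometry of orbit spaces, it requires the explicit correspondence between the vertical and horizontal contributions to $\mathrm{scal}_g$ and $\mathrm{scal}_{g'}$ along the identification $M/\star\cong M'/\bullet$, which in turn comes from the symmetric roles of $\star$ and $\bullet$ in the Kaluza--Klein construction. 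Once $\lambda_M\geq\lambda_{M'}$ is in hand, Theorem \ref{thm:scalar}(1) delivers the required metric on $P\times_G M$ and the proof concludes.
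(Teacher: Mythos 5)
Your treatment of $P\times_GM'$ is essentially the paper's intended argument: the hypotheses of Theorem \ref{thm:scalar} transfer from $(M,\star)$ to $(M',\bullet)$ via Corollary 5.2 and Theorem 6.4 of \cite{SperancaCavenaghiPublished}, and the constant $\lambda$ produced by the proof of Theorem \ref{thm:scalar} for the bundle with fiber $(M',g')$ is precisely $\min_{M'}\mathrm{scal}_{g'}/\max_{M'}\mathrm{scal}_{g'}$, so the stated ratio condition on $f$ delivers a metric on $P\times_GM'$.

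The gap is in your second step. You correctly see that applying Theorem \ref{thm:scalar}(1) to $M\hookrightarrow P\times_GM\to M'$ would require $\lambda_M:=\min_M\mathrm{scal}_g/\max_M\mathrm{scal}_g\geq\lambda_{M'}$, but you leave this comparison unproved and propose to extract it from the isometry of the orbit spaces $M/\star\cong M'/\bullet$. That route does not work: the isometry identifies the orbit spaces, not $M$ with $M'$, so it yields no pointwise comparison of $\mathrm{scal}_g$ with $\mathrm{scal}_{g'}$, and for a general $(M,g)$ the inequality $\lambda_M\geq\lambda_{M'}$ can simply fail. What you are missing is the standing hypothesis of this part of the paper, stated just before Theorem \ref{thm:ajuda1}: in all the relevant cross diagrams $M$ is a \emph{standard sphere with a metric of constant sectional curvature}. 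Hence $\mathrm{scal}_g$ is constant, the bundle with fiber $M$ falls under case (2) of Theorem \ref{thm:scalar} (equivalently under Theorem \ref{thm:ajuda1}), and \emph{every} admissible $f$ is realized on $P\times_GM$ with no ratio condition at all; in particular $\lambda_M=1\geq\lambda_{M'}$ holds trivially. With that observation your argument closes; without it, the conclusion for $P\times_GM$ is not established, and the "hard part" you describe is not the right way to obtain it.
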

 \begin{proof}
     The idea consists of observing that such a $f$ can be identified with a smooth function on $M'$. Moreover, the condition in the hypothesis is that $f$ satisfies the same \emph{pinching condition} as $\mathrm{scal}_{\ga'}$. In this manner, we can always find $c >0$ such that (compare with the proof of Theorem \ref{thm:scalar})
     \begin{equation}
         c\min_{P\times_GM'}f < \mathrm{scal}_{\ga'} < c\max_{P\times_GM'}f,
     \end{equation}
     so the result is ensured by Theorem \ref{thm:scalar}. \qedhere
 \end{proof}

 Stating Theorem \ref{thm:ajuda2} in such a manner, i.e., constraining the conditions to $f$ with conditions to the metric $\ga'$ in $M'$ works as a manner to try distinguishing admissible geometries of exotic manifolds just by looking to admissible scalar curvature functions on these. Indeed, observe that the left side of the hypothesis can be changed to
 \begin{equation*}
     \frac{\min_{P\times_GM} f}{\max_{P\times_GM}f},
 \end{equation*}
 so we are comparing explicitly though indirectly, the geometry on $M$ with the geometry of $M'$.
	
	Theorem \ref{thm:ajuda1} and \ref{thm:ajuda2} finish the proofs of Theorems \ref{thm:exemplos1}, \ref{thm:exemplos2} since the mentioned examples are constructed using cross diagrams such as \eqref{eq:CD} in \cite[Theorem A]{SperancaCavenaghiPublished}. Also see Section \ref{sec:examples} for further details.

\section{Prescribing scalar curvature via general vertical warpings}
\label{sec:bundles}

Our goal in this section is to study how to prescribe scalar curvature in fiber bundles which are locally isometric to products. To this aim, we resort to a natural generalization of the \emph{canonical variation}, called \textit{general vertical warping}. Given a Riemannian submersion $\pi:(\overline{M},\ga)\to (M,\textsl h)$ and a smooth function $\phi:M\to \bb R$, we define the metric $\ga_\phi$ along the same lines as the canonical variation:

\[\ga_{\phi}= \ga\Big|_{\cal H{\times}\cal H} + e^{2\phi}\ga\Big|_{\cal V{\times}\cal V}. \]
The submersion $\pi:(\overline{M},\ga_\phi)\to (M,\textsl h)$ is Riemannian and its fibers are $e^{2\phi}$-homothetic to the original fibers in $(\overline M,\ga)$. The main result in this section is Proposition \ref{thm:maingeral}, from which
Theorem \ref{ithm:warped} follows. 

In order to well introduce its statement, define
\begin{gather}
b_k := \dfrac{k+1}{8k},~c_k := \dfrac{(k+1)^2}{8(k-1)k},~\theta_k := 2\dfrac{k-1}{k+1}\label{eq:bk}.
\end{gather}
In what follows, the superscripts consistently denote the dimensions of the underlined manifolds.

\begin{proposition}\label{thm:maingeral}
Let $\pi : F^k \hookrightarrow (\overline M^{n+k},\ga) \to (M^n,\textsl{h})$ be a Riemannian submersion from a closed oriented manifold $\overline M$. Assume that $(\overline M,\ga)$ is locally isometric to a product whose fibers have constant scalar curvature $c \in \mathbb{R}$. 
    \begin{enumerate}[$(a)$]
        \item If $c > 0$ and $\min_{\overline M} \,\mathrm{scal}_{\textsl h} > 0$ then any basic smooth function 
        is the scalar curvature of some general vertical warping metric;
        \item Assume $c \leq 0$ and suppose that a basic function $f : \overline M \to \mathbb R$ satisfies the condition:     \begin{gather}\label{condition1000}
            - b_k\left(f-\mathrm{scal}_{\textsl{h}}\right) + c_kc\left(\mathrm{vol}(M)\right)^{2/\theta_k - 1} \geq 0.
        \end{gather} 
        If $|c| > 0$, then $f$ is the scalar curvature of some general vertical warping metric. In contrast, if $c = 0$, there exists some $A\geq 0$ such that $f+A$ is the scalar curvature of a general vertical warping metric.
    \end{enumerate}
\end{proposition}

Before proceeding to the proof, we observe that the scope of application of Proposition \ref{thm:maingeral} is broader than local products of the form $M\times F$. Indeed, every warped product $(\overline M,\ga)=(M\times F,\textsl h\times_\phi \ga_f)$ admits a metric where it is locally a product. Lemma \ref{thm:maingeral} also holds for submersions which are locally warped products (see Section \ref{sec:examples} for further details).

\subsection{The case $c \leq 0$}

\label{sec:cleq0}
To begin with, assume that $(M^n,\textsl{h})$ is a closed Riemannian manifold and $(F,\ga_F)$ is a Riemannian manifold with constant scalar curvature $c\leq 0$. Let $f: M\times F\to \mathbb{R}$ be a smooth basic function. We start by discussing the proof of Proposition \ref{thm:maingeral} to the case where $\overline M$ is a Riemannian product. To do so, we apply standard variational methods to guarantee the existence of positive solutions for the following PDE:

\begin{equation}\label{eq:EDP}
\Delta_{\textsl{h}}u + \dfrac{k+1}{4k}(f-\mathrm{scal}_{\textsl{h}})u -\dfrac{k+1}{4k} u^{\frac{k-3}{k+1}}c = 0.
\end{equation}

Equation \eqref{eq:EDP} is obtained from a general vertical warping through Lemma \ref{lem:scalarwarped}, a proof of that is obtained as a consequence of the proof of Lemma \ref{lem:scalargeral}, to be stated and proved later in this paper.

\begin{lemma}\label{lem:scalarwarped}
Let $(M,\textsl{h})$ and $(F,\ga_F)$ be Riemannian manifolds and $\phi : M\to \mathbb{R}$ be a smooth function. Denote by $\ga$ the warped metric of $M\times_{e^{2\phi}}F$. Then the scalar curvature of $\ga$ is given by
\begin{equation*}\label{eq:equacao}
\mathrm{scal}_{\ga} = \mathrm{scal}_{\textsl{h}} + e^{-2\phi}\mathrm{scal}_{\ga_F} -k(k-1)|\nabla\phi|_{\textsl{h}}^2 - 2k|\nabla\phi|_{\textsl{h}}^2 - 2k\Delta_{\textsl{h}}\phi.
\end{equation*}
\end{lemma}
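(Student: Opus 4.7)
The plan is to apply the classical warped product curvature identities of Bishop--O'Neill (see, e.g., O'Neill, \emph{Semi-Riemannian Geometry}, Chapter 7) for a warped metric of the form $g_B + f^2 g_F$, and then specialize to $f = e^{\phi}$. First I would unwind the notation by declaring that $\widetilde g = g_B + e^{2\phi} g_F$, and fix the horizontal/vertical decomposition $TM = \mathcal H \oplus \mathcal V$, where $\mathcal H$ consists of lifts of vectors tangent to $B$ and $\mathcal V$ is tangent to the fiber $\{x\}\times F$. A $g_B$-orthonormal frame $\{E_i\}_{i=1}^n$ on $B$ lifts to a $\widetilde g$-orthonormal horizontal frame, while if $\{V_j\}_{j=1}^k$ is $g_F$-orthonormal on $F$ then $\{e^{-\phi}V_j\}$ is $\widetilde g$-orthonormal.

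Next I would recall (or quickly derive from the Koszul formula) the Ricci components of $\widetilde g$: for $X,Y$ horizontal lifts and $V,W$ vertical one has
\begin{align*}
\widetilde{\Ricci}(X,Y) &= \Ricci_B(X,Y) - \tfrac{k}{f}\,\Hess_B f(X,Y), \\
\widetilde{\Ricci}(V,W) &= \Ricci_F(V,W) - \widetilde g(V,W)\Bigl(\tfrac{\Delta_B f}{f}+(k-1)\tfrac{|\nabla f|^2}{f^2}\Bigr), \\
\widetilde{\Ricci}(X,V) &= 0,
\end{align*}
with $k=\dim F$. Tracing against the orthonormal frame above (noting the $e^{-2\phi}$ rescaling on vertical vectors) yields the well-known warped-product scalar curvature identity
\begin{equation*}
\widetilde{\mathrm{scal}} = \mathrm{scal}_B + \tfrac{1}{f^2}\mathrm{scal}_F - \tfrac{2k\,\Delta_B f}{f} - k(k-1)\tfrac{|\nabla f|^2}{f^2}.
\end{equation*}

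Finally I would substitute $f = e^{\phi}$, which gives the identities $\nabla f = e^{\phi}\nabla\phi$, $\tfrac{|\nabla f|^2}{f^2} = |\nabla\phi|^2$, and $\tfrac{\Delta_B f}{f} = \Delta_B\phi + |\nabla\phi|^2$. Plugging these in and expanding the $-2k\tfrac{\Delta_B f}{f}$ term into $-2k\Delta_B\phi - 2k|\nabla\phi|^2$ reproduces \eqref{eq:equacao} verbatim. The main ``obstacle'' is purely bookkeeping: one must be careful about the convention ($g_F$ is scaled by $f^2 = e^{2\phi}$, so $\widetilde g(V,W) = e^{2\phi} g_F(V,W)$) when switching between $g_F$- and $\widetilde g$-orthonormal vertical frames, since it is precisely that rescaling that produces the $e^{-2\phi}\,\mathrm{scal}_F$ factor in the final formula.
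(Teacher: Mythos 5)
Your proposal is correct: the Ricci components you quote are the standard Bishop--O'Neill identities, their trace over the adapted $\widetilde g$-orthonormal frame gives the stated warped-product scalar curvature formula, and the substitution $f=e^{\phi}$ (with $\Delta_B f/f=\Delta_B\phi+|\nabla\phi|^2$) reproduces \eqref{eq:equacao} exactly. This is essentially the same argument as the paper's, which merely sums the sectional curvatures of Proposition \ref{prop:formulaewarped} over the same adapted frame instead of passing through the Ricci tensor first.
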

Indeed, given a basic function $f\in C^{\infty}(M\times F;\mathbb{R})$, consider the PDE
\begin{equation}\label{eq:eq}
\mathrm{scal}_{\textsl{h}} + e^{-2\phi}\mathrm{scal}_{\ga_F} - f = k(k-1)|\nabla\phi|_{\textsl{h}}^2 + 2k\left\{|\nabla\phi|_{\textsl{h}}^2 + \Delta_{\textsl{h}}\phi\right\}. 
\end{equation}
The solution $\phi$ is such that $f$ is the scalar curvature of the warped metric $\widetilde{\ga} = \textsl{h} + e^{2\phi}\ga_F$ (this is a consequence of the to-be-proved Lemma \ref{lem:scalargeral}). By setting $\phi = \log u^{\frac{2}{k+1}}$ a direct computation shows that equation \eqref{eq:eq} is reduced to equation \eqref{eq:EDP}.

To show that the PDE \eqref{eq:EDP} has a positive solution, assume that $f,\mathrm{scal}_{\textsl{h}}$ are continuous functions. Define the following functional $J : W^{1,2}(M) \to \mathbb{R}$ by
\begin{gather}
J(u)= \dfrac{1}{2}\int_M|\nabla u|_{\textsl{h}}^2 -b_k\int_M\left(f-\mathrm{scal}_{\textsl{h}}\right)u^2 + c_k\int_Mcu^{\theta_k}.\label{eq:functional1}
\end{gather}
Note that $J$ is well defined and, according to the classical Sobolev Embedding Theorem, it is of class $C^1$.

\subsubsection{The case $c < 0$}
Assume that $c < 0$. We will obtain the desired solution $u$ to equation \eqref{eq:EDP} as a minimum for $J$ restricted to the set $\mathbf{M} := \{u \in W^{1,2}(M) : u \geq 0,~ \int_M u^{\theta_k} = 1 \}$. This is the content of Lemma \ref{lem:main}.

\begin{lemma}\label{lem:main}
Define $\mathbf{M} := \{u \in W^{1,2}(M) : u \geq 0,~ \int_M u^{\theta_k} = 1 \}.$ Assume that $f, \mathrm{scal}_{\textsl{h}}$ are continuous functions, that $c< 0$ and that the following inequality holds
\begin{gather}
- b_k\left(f-\mathrm{scal}_{\textsl{h}}\right) + c_kc\left(\mathrm{vol}(M)\right)^{2/\theta_k - 1} \geq 0.
\end{gather} 
Then,
\begin{enumerate}
\item there is a constant $c_0 > 0$ such that $J\big|_{\mathbf{M}} >c_0,$
\item $J\big|_{\mathbf M}$ is coercive,
\item $\mathbf{M}$ is weakly closed and $J\big|_{\mathbf M}$ is weakly lower semi-continuous.
\end{enumerate}
\end{lemma}
Before proving it, let us show how Proposition \ref{thm:maingeral} follows from it. Lemma \ref{lem:main} provides that $J\big|_{\mathbf{M}}$ has a minimum point in $\mathbf{M}$. Therefore, we have a Lagrange Multiplier problem: there exists $\Lambda \in\bb R$ satisfying
\begin{equation}\label{eq:lagrange}
\int_M\langle \nabla u,\nabla v\rangle -2b_k\int_M(f-\mathrm{scal}_{\textsl h})uv + cc_k\theta_k\int_Mu^{\theta_k-1}v = \theta_k\Lambda\left(\int_Mu^{\theta_k-1}v\right),
\end{equation}
$\forall v\in W^{1,2}(M)$.
We claim that $\Lambda\geq 0$: by taking $v =u$, we have
\begin{equation*}
	\int_M|\nabla u|^2-2b_k\int_M(f-\mathrm{scal}_{\textsl h})u^2 + cc_k\theta_k\int_Mu^{\theta_k} = \theta_k\Lambda\int_M u^{\theta_k}
\end{equation*}
However, since $\theta_k \leq 2$ the H\"older's inequality implies 
\begin{equation*}
\int_Mu^{\theta_k}=\left(\int_M u^{\theta_k}\right)^{2/\theta_k} \leq \left(\mathrm{vol}(M)\right)^{2/\theta_k - 1}\int_M u^2.
\end{equation*}

Therefore,
\begin{align*}\label{eq:lagrange2}
	\theta_k\Lambda\int_M u^{\theta_k}&\geq 
	\int_M|\nabla u|^2+2\int_M\left(-b_k(f-\mathrm{scal}_{\textsl h}) + \frac{\theta_k}{2}cc_k(\rm{vol}(M))^{2/\theta_k-1}\right)u^{2} \\\nonumber 
	&\geq 2\int_M\left(-b_k(f-\mathrm{scal}_{\textsl h}) + cc_k(\rm{vol}(M))^{2/\theta_k-1}\right)u^{2}\geq 0,
\end{align*}
so $\Lambda \geq 0$.

We now multiply both sides of \eqref{eq:lagrange} by $\zeta>0$ to get
\begin{equation*}
\int_M\langle \nabla (\zeta u),\nabla v\rangle -2b_k\int_M(f-\mathrm{scal}_{\textsl h})(\zeta u)v + \zeta^{-\theta_k+2}(cc_k-\Lambda)\theta_k\int_M(\zeta u)^{\theta_k-1}v = 0,
\end{equation*}
$\forall v\in W^{1,2}(M)$. But since $\Lambda\geq 0$ we can assume that $\zeta$ is such that $\zeta^{-\theta_k+2}(cc_k-\Lambda)=cc_k$, concluding that $\tilde u=\zeta u$ is a weak solution for equation \eqref{eq:EDP}.

To finish the argument, note that if $f$ and $\mathrm{scal}_{\textsl{h}}$ are smooth functions, the classical regularity theory for elliptic PDEs (see \cite[Theorem 3.58, p. 87]{aubinbook}) guarantees that the solution $u$ is smooth. Finally, we observe that the solution $u$ is positive. Otherwise, a Maximum Principle argument, as in \cite[Proposition 3.75, p.98]{aubinbook} guarantees $u\equiv 0$.
We thus have proved that any basic smooth function $f: M\times F \to \mathbb{R}$ satisfying \eqref{condition1000} can be realized as the scalar curvature of a warped product metric, thus concluding item $(2)$ of Proposition \ref{thm:maingeral} in the case of Riemannian products. It is left to prove Lemma \ref{lem:main}.
\begin{proof}[Proof of Lemma \ref{lem:main}]
It is easy to see $\mathbf{M} \neq \emptyset.$ Moreover, Poincaré's Inequality implies that 
\begin{align}
J(u)&\geq 
	\frac{1}{2}\int_M|\nabla u|^2+\int_M\left(-b_k(f-\mathrm{scal}_{\textsl h}) + cc_k(\rm{vol}(M))^{2/\theta_k-1}\right)u^{2}\label{eq:lagrange3}\\ \nonumber 
	&\geq \int_M\left(\frac{\lambda_1}{2}-b_k(f-\mathrm{scal}_{\textsl h}) + cc_k(\rm{vol}(M))^{2/\theta_k-1}\right)u^{2} - \frac{\lambda_1}{2\rm{vol}(M)}\left(\int_M u\right)^2,
\end{align}
where $\lambda_1$ is the first positive eigenvalue of $-\Delta_{\textsl h}$. On the other hand, the H\"older's Inequality gives:
\begin{gather*}
 \int_M u\leq \rm{vol}(M)\left(\int_M u^{\theta_k}\right)^{1/\theta_k}=\rm{vol}(M),
\end{gather*}
leading to an explicit bound for the last term above. We thus conclude that $J(u)$ has a lower bound, since $-b_k(f-\mathrm{scal}_{\textsl h}) + cc_k(\rm{vol}(M))^{2/\theta_k-1}\geq 0$ by hypothesis. Likewise, coercivity follows directly from \eqref{eq:lagrange3}: If $\|u\|_{L^2}\rightarrow \infty$ then equation \eqref{eq:lagrange3} implies that $J(u) \rightarrow \infty$. If $\|u\|_{L^2}$ is bounded but $\||\nabla u|\|_{L^2} \rightarrow \infty$, the same equation ensures the result.

To see that $\mathbf{M}$ is weakly closed, take $\{u_m\}\subset \mathbf{M}$ weakly converging to $u \in W^{1,2}(M).$ According to the Rellich--Kondrachov's Theorem, the Sobolev space $W^{1,2}(M)$ compactly embeds in $L^{\theta_k}(M)$. Hence, since $1=\int_M|u_m|^{\theta_k}\rightarrow \int_M|u|^{\theta_k}$, we have the result: not only $\int_M |u|^{\theta_k} =1$ but since we have the strong $L^{\theta_k}(M)$ convergence, hence strong $L^1(M)$ convergence, it holds that, up to passing to a subsequence, $\{u_m\}$ pointwise converges to $u$, so $u\geq 0$ almost everywhere. These imply that $\mathbf{M}$ is weakly closed, as desired.

Finally, to ensure that $J$ is weakly lower semi-continuous, note that since $\mathrm{scal}_{\textsl{h}},~\mathrm{scal}_{\ga_F}$ and $f$ are continuous functions and
\begin{equation*}
	\int_M|\nabla u|_{\textsl{h}}^2 \leq \liminf_{m\to\infty}\int_M|\nabla u_{m}|_{\textsl{h}}^2,
\end{equation*}
we conclude that
\begin{align*}
	\liminf_{m\to\infty} J(u_{m}) &= \liminf_{m\to\infty}\left(\int_M|\nabla u_{m}|_{\textsl{h}}^2 - b_k\int_M(f-\mathrm{scal}_{\textsl{h}})u_{m}^2 + c_k\int_Mcu_{m}^{\theta_k}\right)\\
	&\geq \int_M|\nabla u|_{\textsl{h}}^2- b_k\int_M(f-\mathrm{scal}_{\textsl{h}})u^2 + c_k\int_Mcu^{\theta_k} = J(u).\qedhere
\end{align*}
\end{proof}

\subsubsection{The case $c=0$}
We finally deal with the case $c = 0$. We only sketch it since it follows closely to the proof of Lemma \ref{lem:main} as we shall make clear. 

In this case, the functional $J$ is reduced to
\begin{equation}
J(u) = \h\int_M|\nabla u|_{\textsl h}^2 -b_k\int_M(f-\mathrm{scal}_{\textsl h})u^2
\end{equation}

For this case, we can search for a minimum of $J$ restricted to the set
\[\mathbf{M} = \left\{u\in W^{1,2}(M) : \int_Mu^2 = 1\right\}.\]

Since the H\"older's Inequality implies that there is $C>0$ such that for any $u\in \mathbf{M}$ we have $(\int_M u)^{2} \leq C\left(\int u^2\right) = C$, it is immediate to see that the Poincaré's Inequality implies that $J$ has a lower-bound restricted to $\textbf M$. Moreover, the coercivity also follows under the assumption that $f-\mathrm{scal}_{\textsl h}\leq 0$. The same routine argument used in Lemma \ref{lem:main} implies that $\textbf{M}$ is weakly closed. Therefore, $J$ has a critical point in $\textbf{M}$.

Once more, given any function $v\in W^{1,2}(M)$ tangent to $\mathbf{M}$, one has a Lagrange Multiplier problem:
\begin{equation}\label{eq:lagranjeado}
 \int_M\langle \nabla u,\nabla v\rangle -2b_k\int_M(f-\mathrm{scal}_{\textsl h})uv = 2\Lambda \int_M uv,
\end{equation}
for every $v\in W^{1,2}(M)$. Making $v = u$ we see that since $-2b_k\max_M(f-\mathrm{scal}_{\textsl h})\geq 0$ and $\int_M u^2 = 1$ then $\Lambda \geq 0$. 

Now let us rewrite equation \eqref{eq:lagranjeado} as
\begin{equation}\label{eq:reescrita}
 \int_M\langle \nabla u,\nabla v\rangle -2b_k\int_M(f-\mathrm{scal}_{\textsl h}+\Lambda b_k^{-1})uv = 0.
\end{equation}
Note that equation \eqref{eq:reescrita} is the weak formulation of the warped product approach to prescribing the function
$f+\Lambda b_k^{-1}$, which includes the result defining $A := +\Lambda b_k^{-1}$.

We finish this subsection by sketching the proof of the general case of item $(2)$ of Proposition \ref{thm:maingeral} -- it follows via straightforward modifications of the previous arguments. We start by recalling the following result:

\begin{lemma}\label{lem:scalargeral}

Let $F^k\hookrightarrow (\overline M^{n+k},\ga) \stackrel{\pi}{\rightarrow} (M^n,\textsl{h})$ be a Riemannian submersion. Let $\widetilde \ga$ be a general vertical warping metric on $M$ via the function $u^{\frac{4}{k+1}}$, where $u : M\to \mathbb{R}$ is a smooth basic function. Then the scalar curvature of $\widetilde \ga$ is given by
\begin{multline}\mathrm{scal}_{\widetilde \ga} = \mathrm{scal}_{\ga} -\dfrac{4k}{k+1}u^{-1}\Delta_{\textsl{h}}u + \left(4+2(k-1)\right)\frac{2}{k+1}u^{-1}\d u(H)+ \left(u^{-\frac{4}{k+1}}-1\right)\mathrm{scal}_{\ga_F} - \left(1-u^{\frac{4}{k+1}}\right)\|A\|^2,\end{multline}
where $\|A\|^2$ is a term that depends on the non-integrability of the horizontal distribution $\cal H=(\ker \d\pi)^\perp$. In particular, $\|A\|^2=0$ if $\pi$ is locally a warped product.
\end{lemma}

\begin{proof}
 We shall use the formulae described in Chapter 2 in \cite{gw}. Next, $K$ always describes the \emph{non-reduced sectional curvature} for the subscripted metric. Let $T_i = e^{-\phi}v_i,~v_i\in \ker \d \pi$ and $X\in \cal H.$ Then,
\begin{align*}
    {\widetilde K}(e^{-\phi}v_1,e^{-\phi}v_2) &= (e^{-2\phi} -1)K_{\ga_F}(v_1,v_2) + K_{\ga}(v_1,v_2) - |\nabla\phi|^2 \\+\d\phi(\sigma(v_1,v_1) +\sigma(v_2,v_2)),\\
    \widetilde K(X,e^{-\phi}v) &= K_{\ga}(X,v) - (1 -e^{2\phi})|A^*_Xv|^2-
    \mathrm{Hess}~\phi(X,X) \\- \d\phi(X)^2 + 2\d\phi(X)g(S_Xv,v).
\end{align*}
Take a $\ga$-orthonormal basis $\{e_i\}$ to $\mathcal{H}.$ Then,
\begin{equation*}
    \sum_{i,j}\widetilde K(e_i,e_j) = (1-e^{2\phi})\mathrm{scal}_{\textsl h} + e^{2\phi}\mathrm{scal}^{\mathcal H},
\end{equation*}
where $\mathrm{scal}^{\cal H}$ means the corresponding sum of sectional curvatures appearing in a scalar curvature expression, but only restricted to \emph{horizontal vectors}, that is, vectors belonging to $\cal H$.

We also have
\begin{equation*}
    \sum_{r,s}\widetilde K(e^{-\phi}v_r,e^{-\phi}v_s) = (e^{-2\phi} -1)\mathrm{scal}_{\ga_F} + \mathrm{scal}^{\mathcal V} - k(k-1)|\nabla\phi|^2 + 2(k-1)\d\phi(H)
\end{equation*}

\begin{equation*}
    2\sum_{i,r}\widetilde K(e^{-\phi}v_r,e_i) = 2\sum_{i,r}K(e_i,v_r) - 2(1-e^{2\phi})\sum_{i,r}|A^*_{e_i}v_r|^2 - 2k\left(\Delta_{\textsl h}\phi + |\nabla\phi|^2\right) + 4\sum_{r,i}d\phi(e_i)g(S_{e_i}v_r,v_r),
\end{equation*}
where $\mathrm{scal}^{\cal V}$ means the corresponding sum of sectional curvatures appearing in a scalar curvature expression, but only restricted to \emph{vertical vectors}, that is, vectors belonging to $\ker \d \pi$.

Without loss of generality we assume that $\d\phi(e_1) = |\nabla\phi|,$ i.e, $e_1 = \frac{\nabla\phi}{|\nabla\phi|},~\d\phi(e_i) = 0,~i \geq 2$, from where we obtain
\begin{align*}
  4\sum_{r,i}\d\phi(e_i)g(\nabla_{e_i}v_r,v_r) = 4\mathrm{tr}~S_{{\nabla\phi}} =4\d\phi(H).
\end{align*}

So we conclude that
\begin{multline}\label{eq:scalarpreliminar}
    \widetilde{\mathrm{scal}} = \mathrm{scal}_g - 2(1-e^{2\phi})\sum_{i,r}|A^*_{e_i}v_r|^2+(1-e^{2\phi})(\mathrm{scal}_B-\mathrm{scal}^{\cal H})
    +(e^{-2\phi}-1)\mathrm{scal}_{\ga_F} \\- k(k-1)|\nabla\phi|^2- 2k(\Delta_{\textsl h}\phi + |\nabla\phi|^2)+\left(4+2(k-1)\right)\d\phi(H).
\end{multline}

Once equation \eqref{eq:scalarpreliminar} only differs from equation \eqref{eq:eq} by the terms
\[2(1-e^{2\phi})\sum_{i,r}|A^*_{e_i}v_r|^2+ \left(4+2(k-1)\right)d\phi(H),\]
by introducing the change of variables $\phi = \log\varphi$ and $\varphi = u^{\frac{2}{k+1}}$ we conclude that
\begin{multline}\widetilde{\mathrm{scal}} = \mathrm{scal}_{\ga} -\dfrac{4k}{k+1}u^{-1}\Delta_{\textsl h}u + \left(4+2(k-1)\right)\frac{2}{k+1}u^{-1}\d u(H)\\ + \left(u^{-\frac{4}{k+1}}-1\right)\mathrm{scal}_F + \left(1-u^{\frac{4}{k+1}}\right)\left(\mathrm{scal}_{\textsl h}-\mathrm{scal}^{\cal H}-2\sum_{i,r}|A^*_{e_i}v_r|^2\right).\end{multline}
\end{proof}

We proceed by proving that:
\begin{lemma}\label{lem:eliptico}
Let $F^k\hookrightarrow (\overline M^{n+k},\ga) \stackrel{\pi}{\rightarrow} (M^n,\textsl{h})$ be a Riemannian submersion which is locally isometric to a product. If $\Delta_{\ga}$ denotes the Laplace operator on the metric $\ga$, then the restriction of $\Delta_{\ga}$ to basic functions defines a strongly elliptic operator.
\end{lemma}
\begin{proof}
Let $u: \overline M \to \mathbb{R}$ be a basic function. Once $\overline M$ is compact, it is possible to choose a collection of open sets $\{W_l\}\subset \overline M$ trivializing the submersion $\pi$ in the following sense (see \cite{Hermann1960ASC})
\[W_l = U_l\times F,~U_l\subset M.\]
Once $u$ is a basic function,
\[u|_{W_l}(x) = u(x_1,x_2) = u(x_1,x_2'),~\forall x_2,x_2'\in F,~\forall x_1\in U_l.\]
If $\{\psi_l\}$ denotes a partition of unity subordinated to $\{U_l\}$, then
$u = \sum_l\psi_lu$ and there is a well-defined injection
\[\zeta : W^{1,2}(\overline M) \to W^{1,2}(M)\]
\[u \mapsto v,\]
where $v = \sum_lv_l,~v_l(x_1) = \psi_l(x_1)u(x_1,x_2),~\forall x_1\in U_l$.

Since the submersion is locally a product, it follows that its fibers are totally geodesic, so $H = 0$. Hence, by identifying $\zeta u = u$ one has that $\Delta_{\textsl{h}}u = \Delta_{\ga}u$ once $\Delta_{\textsl{h}}u = \Delta_{\ga}u - \d u(H) = \Delta_{\ga}u$ (see \cite[Section 2.1.4, p.53]{gw}). 
\end{proof}

To prove Proposition \ref{thm:maingeral}, item $(2)$, we observe that since the Riemannian submersion, $\pi$ is locally isometric to a product, its fibers are totally geodesic submanifolds, and its horizontal distribution is integrable. In particular, $H \equiv 0$ and $\|A\|= 0$. Hence, according to Lemmas \ref{lem:scalargeral} and \ref{lem:eliptico}, given a basic function $f:\overline M\to \mathbb{R}$, we need to study the following elliptic problem
\begin{equation}\label{eq:pdegeral}
\left(\frac{k+1}{4k}\right)u(f-\mathrm{scal}_{\ga}) = -\Delta_{\textsl{h}}u+\left(\frac{k+1}{4k}\right)\left(u^{\frac{k-3}{k+1}}-u\right)c,
\end{equation}
where $0\geq c = \mathrm{scal}_{\ga_F}.$

The proof of item $(2)$ on Proposition \ref{thm:maingeral} follows by adapting the proof of Lemma \ref{lem:main} to solve the PDE \eqref{eq:pdegeral}. This is done by searching for a minimum point in the set \begin{equation*}
\mathbf{M}_{\cal F} := \{u \in W^{1,2}(\overline M) : u\geq 0, ~u~\text{is basic and}~ \int_M u^{\theta_k} = 1\}.
\end{equation*}
{We use the subscript $\cal F$ to emphasize that we are considering classes of elements whose function representatives are constant along the foliation induced by the fibers $F$.}

\subsection{The case $c > 0$} We consider this case separately since we need a different version of Lemma \ref{lem:main}. Our result is a natural generalization of \cite[Theorem 4.10, p. 253]{ehrlich1996}. We first remark that this case suffers no obstruction as it is substantiated by the fact that a product of a manifold with positive scalar curvature always admits positive scalar curvature.

Following the notation established in Section \ref{sec:cleq0} and motivated by Lemma \ref{lem:eliptico}, we consider the following functional of class $C^1$:
\begin{equation}
J(u) = \frac{1}{2}\int_M|\nabla u|^2 +2b_k\int_M\left\{\left(\mathrm{scal}_{\ga}-c-f\right)\frac{u^2}{2}+c\theta_k^{-1}u^{\theta_k}\right\}
\end{equation}
defined in
\begin{equation*}
\mathbf{M}_{\cal F} := \{u \in W^{1,2}(\overline M) : u\geq 0, ~u~\text{is basic and}~ 2b_kc\theta_k^{-1}\int_M u^{\theta_k} = 1\}.
\end{equation*}
By following arguments analogous to the ones in the proof of Lemma \ref{lem:main}, it can be shown that if 
\[\min_M(\mathrm{scal}_{\ga}-c) \geq \max_Mf,\]
then $J|_{\mathbf{M}_{\cal F}}$ has a minimum point that is a smooth, positive solution to equation \eqref{eq:pdegeral}. The item is proved since we can prescribe any smooth function: Dividing $f$ by an arbitrary positive constant and scaling the resulting metric is always possible, so the above constraint is always overcome.\halmos

\section{Examples}
\label{sec:examples}

\subsection{The constructions on Theorem \ref{thm:exemplos1}}
The $\star$-diagram construction was used in \cite[Theorem 1.1]{SperancaCavenaghiPublished} to produce several examples of manifolds satisfying the hypotheses of $F$ in Theorem \ref{thm:scalar}. We recall them here:

\begin{theorem}\label{thm:llohann}
Let $\Sigma^7$ and $\Sigma^8$ be any homotopy spheres of dimensions 7 and 8, respectively; $\Sigma^{10}$ be any homotopy 10-sphere which bounds a spin manifold; $\Sigma^{4m{+}1},\Sigma^{8m+5}$ be Kervaire spheres of dimensions $4m{+}1,8m{+}5$, respectively. Then, the following manifolds admit an explicit realization as in diagram \eqref{eq:CD} and satisfy the hypotheses of $F$ in Theorem \ref{thm:scalar}:
\begin{enumerate}[$(i)$]
\item $M^7\#\Sigma^7$, where $M^7$ is any 3-sphere bundle over $S^4$
\item $M^8\#\Sigma^8$, where $M^8$ is either a 3-sphere bundle over $S^5$ or a 4-sphere bundle over $S^4$
\item $M^{10}\#\Sigma^{10}$, where:
\begin{enumerate}[$(a)$]
\item $M^{10}=M^8\times S^2$ with $M^8$ as in item $(ii)$
\item $M^{10}$ is any 3-sphere bundle over $S^7$, 5-sphere bundle over $S^5$ or 6-sphere bundle over $S^4$
\end{enumerate}
\item $M^{4m+1}\#\Sigma^{4m+1}$ where \label{item:5}
\begin{enumerate}[$(a)$]
\item $S^{2m}\hookrightarrow M^{4m+1}\to S^{2m+1}$ is the sphere bundle associated to any multiple\footnote{\label{footnote1}That is, a bundle whose transition function $\alpha: S^{n-1}\to G$ is a multiple of $\tau_{2m}: S^{2m}\to O(2m+1)$, $\tau_m^\bb C: S^{2m}\to U(m)$ or $\tau_m^\bb H: S^{4m+2}\to Sp(m)$, for $G=O(2m), U(m+1)$ or $Sp(m)$, the transition functions of the orthonormal frame bundle and its reductions, respectively.} of $O(2m{+}1)\hookrightarrow O(2m{+}2)\to S^{2m+1}$, the frame bundle of $S^{2m+1}$
\item $\bb C \rm P^{m}\hookrightarrow M^{4m+1}\to S^{2m+1}$ is the $\bb C\rm P^m$-bundle associated to any multiple of the bundle of unitary frames $U(m)\hookrightarrow U(m+1)\to S^{2m+1}$
\item $M^{4m+1}=\frac{U(m+2)}{SU(2)\times U(m)}$
\end{enumerate}
\item $(M^{8r+k}\times N^{5-k})\#\Sigma^{8r+5}$ where $N^{5-k}$ is any manifold with positive Ricci curvature and
\begin{enumerate}[$(a)$]
\item $S^{4r+k-1}\hookrightarrow M^{8r+k}\to S^{4r+1}$ is the $k$-th suspension of the unitary tangent $S^{4r-1}\hookrightarrow T_1S^{4r{+}1}\to S^{4r+1}$,
\item for $k=1$, $\bb H\rm P^{m}\hookrightarrow M^{8m+1}\to S^{4m+1}$ is the $\bb H\rm P^m$-bundle associated to any multiple of $Sp(m)\hookrightarrow Sp(m+1)\to S^{4m+1}$
\item for $k=0$, $M=\frac{Sp(m+2)}{Sp(2)\times Sp(m)}$
\item for $k=1$, $M=M^{8m+1}$ is as in item $(\ref{item:5})$
\end{enumerate}
\end{enumerate}
\end{theorem}

In what follows, we denote by $F\hookrightarrow P\times_G F\to B$ the bundle associated to $G\hookrightarrow P\to B$ with fiber $F$. 

Consider $P, M, M', G$ as in diagram \eqref{eq:CD}. If $M$ can be a fiber for Theorem \ref{thm:scalar}, so can $M'$. Therefore, Theorem \ref{thm:scalar} can be applied to the following bundles:
\begin{enumerate}
\item The associated bundle $M \hookrightarrow P\times_GM \to M$ to $\pi : P\to M,$
\item The associated bundle $M' \hookrightarrow P\times_GM' \to M$ to $\pi : P \to M,$
\item The associated bundle $M \hookrightarrow P\times_GM \to M'$ to $\pi' : P\to M',$
\item The associated bundle $M' \hookrightarrow P\times_GM' \to M'$ to $\pi' : P\to M'$.
\end{enumerate}

Here we consider $M, M'$ with the $G$-actions descending from $\star,\bullet$, respectively. Note that $M'$ can be taken as any of the manifolds given by Theorem \ref{thm:llohann} and $M$ as its ``counterpart'' in its corresponding $\star$-diagram provided in \cite{SperancaCavenaghiPublished}. Namely, $M$ is the original manifold, and $M'$ is the resulting connected sum.

 Now we explore more explicitly the construction in the previous paragraph. Consider Duran's $\star$-diagram diagram introduced in \cite{duran2001pointed}, i.e., the $\star$-diagram with $G = S^3, P = Sp(2)$, $M = S^7, M' = \Sigma^7$. Let $B=S^7$ or $\Sigma$. Consider its respective principal bundle 
$\mathrm{pr} : S^3\hookrightarrow Sp(2) \to B.$

 Let $Sp(2) \hookrightarrow Sp(2)\times_{S^3}Sp(2) \to B$ be the associated bundle to $\mathrm{pr} : S^3 \hookrightarrow Sp(2) \to B$. In both cases, the $S^3$-manifold $B$ can be regarded as the fiber of associated bundles to the principal bundles $\pi : Sp(2) \to S^7$ or $\pi': Sp(2) \to \Sigma^7$, producing new examples of fiber bundles where Theorem \ref{thm:scalar} can be applied to:
\begin{enumerate}[(a)]
\item $S^7 \hookrightarrow Sp(2)\times_{S^3}S^7 \to \Sigma^7$;
\item $\Sigma^7 \hookrightarrow Sp(2)\times_{S^3}\Sigma^7 \to \Sigma^7$;
\item $\Sigma^7\hookrightarrow Sp(2)\times_{S^3}\Sigma^7\to S^7$;
\item $S^7 \hookrightarrow Sp(2)\times_{S^3}S^7 \to S^7$.
\end{enumerate}

The procedure is done similarly for all the other examples in the thesis of Theorems \ref{thm:exemplos1} and \ref{thm:exemplos2}. We recommend the Examples section in \cite{cavenaghi2019positive} for additional results/clarifications.

\subsection{Polar foliations and Calabi--Yau bundles}

\label{sub:exemplinhos}
Observe that a manifold is locally a metric product if it admits some cover that splits as a metric product. Specifically,  $(\overline{M},\ga)$ is locally isometric to a product if there are  $(M,\textsl{h})$ and $(F,\ga_F)$ such that $\overline M$ is isometric to the quotient of $(M{\times}F, \textsl h\times \ga_T)$ by a subgroup $\Gamma<\pi(\overline M)$ acting as deck transformations.
Although being locally isometric to a product is quite restrictive, we can slightly weaken it by considering locally warped manifolds. A manifold is locally a warped product if there is a function $\psi$ such that the general vertical warping induced by $\psi$, $\widetilde \ga$, makes $(\overline M, \widetilde\ga)$ locally a metric product. 
We recall a result in Gromoll--Walschap \cite{gw} to characterize the latter.
\begin{proposition}[Proposition 2.2.1, \cite{gw}]\label{prop:gwlocallywarped}
Let $\pi:\overline M^{n+k}\to M^n$ be a Riemannian submersion. Then $\pi$ is locally a warped product if and only if
\begin{enumerate}
\item the distribution $\cal H=(\ker \d\pi)^\perp$ is integrable;
\item the fibers are totally umbilic submanifolds of $\overline M$;
\item $\pi$ is isoparametric; i.e., the mean curvature form $\kappa$ (dual to the mean curvature vector field) is basic.
\end{enumerate}
\end{proposition}
Since the metric on the factor $(M,\textsl h)$ is preserved by this construction, Proposition \ref{thm:maingeral} can be applied to manifolds satisfying the conditions in Proposition \ref{prop:gwlocallywarped}.

\subsubsection{Calabi--Yau bundles}
We proceed with a brief discussion about prescribing scalar curvature on some Calabi--Yau bundles.
We first observe that realizing basic scalar functions on a trivial bundle $Y\times F\to Y$ follows from previous works: Assume that $F$ has a metric $\ga_F$ with constant scalar curvature $c$. If $f:Y\to \bb R$ is a smooth function, consider a metric $\ga$ on $Y$ that realizes $\scal_{\ga}=f-c$. Then, the scalar curvature of the product $\tilde\ga=\ga\times\ga_F$ satisfy $\scal_{\tilde\ga}(x,y)=f(x)$. The originality of Theorem \ref{ithm:warped} and Proposition \ref{thm:maingeral} is the possibility to fix and preserve an initial metric $\ga$ on the base $Y$. 

An exciting application revolves around Calabi--Yau manifolds.
Specifically, we apply Proposition \ref{thm:maingeral} to the following result of Tosatti--Zhang:
\begin{theorem}[Theorems 1.2 and 1.3 in \cite{tosatti}]\label{thm:tosatti}
Suppose $\pi:X\to Y$ is a holomorphic submersion with connected fiber $F$ satisfying one of the following:
\begin{enumerate}
\item $X,Y$ are projective manifolds with $X$ Calabi--Yau;
\item $X,Y$ are compact K\"ahler manifolds; $Y,F$ are Calabi--Yau; and either $b_1(F)=0$ or $b_1(Y)=0$ and $F$ is a torus.
\end{enumerate}
Then $Y$ is Calabi--Yau, and there is a finite unramified cover $p : \widetilde Y \to Y$ such that the pullback bundle $\tilde \pi: p^*X\to \widetilde Y$ is holomorphically trivial.
\end{theorem}

In the present context, a \textit{finite unramified covering map} means a smooth covering map with a finite number of sheets.
Theorem \ref{thm:tosatti} is proved in the holomorphic context without considering compatibility between the metrics and the submersion. We observe here that the holomorphic submersion in the theorem can be taken as a Riemannian one while choosing a Calabi--Yau metric $\ga_Y$ on $Y$. 

To this aim, note that since $Y$ is Calabi--Yau, its fundamental group is Abelian. Therefore every covering is regular. In particular, $p:\widetilde Y\to Y$ is a principal bundle with a finite principal group $\Gamma=\pi_1(Y)/\pi_1(\widetilde Y)$. Since $\Gamma$ is finite, there is a $\Gamma$-invariant metric $\ga_F$ on $F$, therefore there is a metric $\ga$ on $X$ such that both $\tilde\pi:(p^*X,p^*\ga_Y\times \ga_F)\to (X,\ga)$ and $\pi:(X,\ga)\to (Y,\ga_Y)$ are Riemannian submersions. We thus conclude that $\ga$ is locally a metric product, and Theorem \ref{ithm:warped} applies. The proof of Corollary \ref{icor:CY} is concluded by observing that the construction carries over with any arbitrary metric on $Y$.

\subsection*{Acknowledgments}
The authors are thankful to Prof. Marcus Marrocos for his valuable comments on the analytical part of this paper and to Prof. Marcos Alexandrino for pointing out the possible application of our results to polar foliations.

\appendix

\section{General Vertical Warpings}

\label{sec:appendix}

Let $F\hookrightarrow (M,g) \stackrel{\pi}{\rightarrow} B$  be a Riemannian submersion. Obtaining new Riemannian submersions from $\pi$ is possible by introducing some metric deformations changing the $g$ in vertical directions. Let $\phi: M \to \mathbb{R}$ be a smooth function. We define a new metric $\widetilde g$ on $\pi$ in the following way 
\[\widetilde g(E,F) :=g\left(E^{\cal H},F^{\cal H}\right) +e^{2\phi}g\left(E^{\cal V},F^{\cal V}\right),~\forall  E,F\in T_pM,~\forall p\in M.\]
Since this metric preserves the horizontal distribution,  $\pi : (M, \widetilde g) \to B$ remains a Riemannian submersion. Denote by $\widetilde \nabla, \widetilde R$ the Levi-Civita connection and the Riemann curvature tensor of $\widetilde g.$ 

\begin{definition}
A Riemannian submersion $\pi : (M,\widetilde g)\to B$ such that $\phi$ is constant along the fibers (or equivalently, $\nabla\phi$ is basic) is called \textit{general vertical warping}.
\end{definition}

\begin{proposition}
\label{prop:generalverticalwarping}
Let $F\hookrightarrow (M,g) \stackrel{\pi}{\rightarrow} B$  be a Riemannian submersion and $\widetilde g$ be a general vertical warping of $g$ with respect to the function $e^{2\phi},~\phi \in C^{\infty}(M;\mathbb{R}).$ Fix $p\in M$. Let $X, Y \in \mathcal H_p,~V, V_i \in \mathcal V_p,~i\in \{1,2\}.$  If $g(V_1,V_2) = 0,$ the following formulae hold true for the sectional curvature $\widetilde K$ of $\widetilde g$:
\begin{center}
\begin{equation*}
    \widetilde K(X,Y) = (1-e^{2\phi})K_B(X,Y) + e^{2\phi}K(X,Y),
\end{equation*}
\begin{multline*}
    \widetilde K(V_1,V_2) = (e^{2\phi}-e^{4\phi})K_F(V_1,V_2) + e^{4\phi}K_g(V_1,V_2)  
    -e^{4\phi}|V_1|^2|V_2|^2|\nabla\phi|^2 + e^{4\phi}d\phi(\sigma(V_1,V_1))|V_2|^2 +e^{4\phi}d\phi(\sigma(V_2,V_2))|V_1|^2,
    \end{multline*}
    \begin{multline*}
    \widetilde K(X,V) = K_g(X,V)e^{2\phi} - e^{2\phi}\left(1-e^{2\phi}\right)|A^*_XV|^2 -
    \left\{\mathrm{Hess}~\phi(X,X) + d\phi(X)^2\right\}e^{2\phi}|V|^2 +2e^{2\phi}d\phi(X)g(S_XV,V).
    \end{multline*}
    \end{center}
    \end{proposition}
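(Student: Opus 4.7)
The plan is to use the fact that $\widetilde\pi\co (M,\widetilde g)\to (B,g_B)$ remains a Riemannian submersion whose horizontal distribution and horizontal metric coincide with those of $\pi$, so that the Gray--O'Neill formulae for Riemannian submersions as recorded in \cite{gw} apply directly to $\widetilde g$. The proof then reduces to (a) identifying the new O'Neill tensor $\widetilde A$ and the new second fundamental form $\widetilde\sigma$ / shape operator $\widetilde S$ of the fibers in $(M,\widetilde g)$, (b) identifying the new intrinsic curvature of the fibers, and (c) substituting into the standard curvature formulae.

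First I would compare $\widetilde\nabla$ with $\nabla$ via Koszul's formula, locally extending the given vectors to basic horizontal or vertical fields. The key simplifications come from the hypothesis that $\phi$ is basic, so $V\phi=0$ for every vertical $V$, and from the standard fact that for a basic field $X$ and a vertical $V$ one has $[X,V]\in\cal V$. A direct Koszul computation then yields $(\widetilde\nabla_X Y)^{\cal H}=(\nabla_X Y)^{\cal H}$ and $\widetilde A_X Y = A_X Y$ as vertical vectors (their $\widetilde g$-norms differ from their $g$-norms by an overall factor $e^{2\phi}$). Moreover, since $\phi\vert_{F_b}$ is locally constant, the induced fiber metric on $F_b$ is a global scalar rescaling $e^{2\phi(b)}g\vert_{F_b}$, so the intrinsic Levi--Civita connection of the fiber is unchanged and $\widetilde K_F=e^{-2\phi}K_F$; however, the fiber second fundamental form $\widetilde\sigma$ acquires an extra horizontal correction proportional to $d\phi$ because the ambient metric has changed.

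I would then substitute these data into the O'Neill--Gray formulae case by case. In the horizontal-horizontal case, $\widetilde K(X,Y)=K_B(X,Y)-3|\widetilde A_X Y|^2_{\widetilde g}=K_B(X,Y)-3e^{2\phi}|A_X Y|^2$ combines with the original identity $K(X,Y)=K_B(X,Y)-3|A_X Y|^2$ to give $\widetilde K(X,Y)=(1-e^{2\phi})K_B(X,Y)+e^{2\phi}K(X,Y)$ at once. In the vertical-vertical case I would apply the Gauss equation for the fibers inside $(M,\widetilde g)$ and plug in $\widetilde K_F=e^{-2\phi}K_F$, the modified $\widetilde T$, and the corrected $\widetilde\sigma$; the factors $|V_1|^2|V_2|^2$ multiplying $|\nabla\phi|^2$ and the terms $d\phi(\sigma(V_i,V_i))|V_j|^2$ arise from the biquadratic-form convention used by the proposition and from the $\widetilde g$-versus-$g$ discrepancy on vertical vectors. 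For the mixed case, the O'Neill mixed-plane formula expressed with $\widetilde A$, $\widetilde T$ and $\widetilde\nabla$ produces the $|A^*_X V|^2$ correction of type $(1-e^{2\phi})$, while horizontal derivatives of the weight $e^{2\phi}$ generate precisely the terms $\Hess\phi(X,X)$, $d\phi(X)^2$ and $d\phi(X)g(S_X V,V)$.

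The main obstacle is the bookkeeping in the vertical-vertical and mixed cases, where horizontal derivatives of $e^{2\phi}$ interact both with the existing $A$, $T$, $\sigma$ tensors and with the systematic mismatch between $\widetilde g$- and $g$-norms on vertical vectors, so that signs and the exact exponential-factor weights must be tracked carefully. To control this, I would fix once for all a local frame consisting of basic horizontal fields and vertical fields, write every component of $\widetilde\nabla$ in that frame with its $e^{\pm 2\phi}$-factor made explicit, and only then expand $\widetilde g(\widetilde R(E_1,E_2)E_2,E_1)$ term by term for each of the three pair types, matching against the corresponding $R$-terms and the $\phi$-derivative terms. The proposition then follows by direct identification.
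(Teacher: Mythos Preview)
Your proposal is correct and outlines the standard derivation via Koszul's formula and the Gray--O'Neill curvature identities for Riemannian submersions. The paper itself does not give an argument here: its entire proof is a citation to \cite[Section~2.1.3, p.~52]{gw}, so you are supplying precisely the computation that the cited reference carries out, and there is nothing further to compare.
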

\begin{proof}
See \cite[Section 2.1.3, p. 52]{gw}
\end{proof}
\subsection{Warped products}

Let $(B,g_B)$ e $(F,g_F)$ be Riemannian manifolds. Assume that $\pi: M = B\times F\to B$ is a trivial Riemannian submersion with the product metric on $M$. Let $\phi: B\to \mathbb{R}$ be a smooth function. Then the metric $\widetilde g:= g_B\times e^{2\phi}g_F$ is an example of general vertical warping known as \textit{warped product}. The Riemannian manifold $(M,\widetilde g)$ is called warped product of $B$ and $F$, being usually denoted by $B\times_{e^{2\phi}}F$.  

On warped products, the Gray--O'Neill tensor $A$ vanishes identically. Moreover, the second fundamental form of the fibers satisfy
\begin{equation}\label{eq:shapenowarped}
    \widetilde\sigma\left(T_1,T_2\right) = -e^{2\phi}g\left(T_1,T_2\right)\nabla\phi = -\widetilde{g}\left(T_1,T_2\right).
\end{equation}

The following formulae for the sectional curvature of a warped product hold true
\begin{proposition}\label{prop:formulaewarped}
Consider a warped product $\pi : \left(B\times F,\widetilde g = g_B\times e^{2\phi}g_F\right) \to M$ and let $\widetilde K$ be the sectional curvature of $\widetilde g$. Fix $(p,f) \in M\times F$ and take $X, Y\in \mathcal H_p,~ V, V_i \in \mathcal V_p,~i\in \{1,2\}.$ Then
\begin{align}
    \widetilde K(X,Y) &= K_{B}(X,Y);\\
    \widetilde K(V_1,V_2) &= e^{2\phi}\left\{K_F(V_1,V_2) - e^{2\phi}|\nabla \phi|^2\left(|V_1|^2|V_2|^2 - \langle V_1,V_2\rangle\right)^2\right\};\\
    \label{eq:warpedvertizon}\widetilde K(X,V) &= -e^{2\phi}|V|^2\left(\langle \nabla\phi,X\rangle^2 + \mathrm{Hess}~\phi(X,X)\right),\\
\end{align}
\end{proposition}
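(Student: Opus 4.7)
The plan is to deduce these warped product formulas as a direct specialization of Proposition \ref{prop:generalverticalwarping}, using the extra rigidity that the pre-warping metric on $B\times F$ is a Riemannian product. The key observation is that warping the product metric $g_B\times g_F$ by $e^{2\phi}$ (with $\phi$ pulled back from $B$) is exactly the general vertical warping construction applied to $g = g_B\times g_F$, so the already-derived formulas of Proposition \ref{prop:generalverticalwarping} apply; what is left is to plug in the tensors that vanish for a product submersion.

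Concretely, the first step is to record that for $(B\times F, g_B\times g_F)\to B$ one has $A\equiv 0$ (the horizontal distribution is the integrable one tangent to $B\times\{f\}$), $\sigma\equiv 0$ and hence $S\equiv 0$ (each fiber $\{b\}\times F$ is a totally geodesic submanifold of the Riemannian product), and the sectional curvatures split as $K_g(X,Y)=K_B(X,Y)$ on purely horizontal planes, $K_g(V_1,V_2)=K_F(V_1,V_2)$ on purely vertical planes, and $K_g(X,V)=0$ on mixed planes. Moreover, the basicness assumption $\phi\in C^\infty(B)$ implies $\nabla\phi\in\mathcal H$, so all terms in Proposition \ref{prop:generalverticalwarping} involving $\nabla\phi$ live in the horizontal part.

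Next I would substitute these vanishings into each line of Proposition \ref{prop:generalverticalwarping}. The horizontal-horizontal formula collapses as $\widetilde K(X,Y)=(1-e^{2\phi})K_B(X,Y)+e^{2\phi}K_g(X,Y)=K_B(X,Y)$, proving the first identity. For the vertical-vertical case, orthogonality of $V_1,V_2$ together with $\sigma=0$ and $K_g(V_1,V_2)=K_F(V_1,V_2)$ reduces the general formula to
\[
\widetilde K(V_1,V_2)=(e^{2\phi}-e^{4\phi})K_F(V_1,V_2)+e^{4\phi}K_F(V_1,V_2)-e^{4\phi}|V_1|^2|V_2|^2|\nabla\phi|^2,
\]
which simplifies to $e^{2\phi}K_F(V_1,V_2)-e^{4\phi}|V_1|^2|V_2|^2|\nabla\phi|^2$; the non-orthogonal case is then recovered by restoring the Gram determinant $|V_1|^2|V_2|^2-\langle V_1,V_2\rangle^2$ that normalizes the sectional curvature. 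For the mixed case, inserting $A=0$, $S=0$ and $K_g(X,V)=0$ gives $\widetilde K(X,V)=-e^{2\phi}|V|^2(\mathrm{Hess}\,\phi(X,X)+d\phi(X)^2)$, which is the stated third identity once one rewrites $d\phi(X)=\langle\nabla\phi,X\rangle$.

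There is no real obstacle here beyond careful bookkeeping: the computation is entirely algebraic once Proposition \ref{prop:generalverticalwarping} is available. The only point requiring mild care is the normalization factor in the vertical-vertical sectional curvature, where one must remember that $\widetilde K(V_1,V_2)$ is defined as $\widetilde R(V_1,V_2,V_2,V_1)/(|V_1|_{\widetilde g}^2|V_2|_{\widetilde g}^2-\langle V_1,V_2\rangle_{\widetilde g}^2)$ and that the $\widetilde g$-norms on vertical vectors carry the factor $e^{2\phi}$; tracking this factor is what converts the $e^{4\phi}|V_1|^2|V_2|^2|\nabla\phi|^2$ term from Proposition \ref{prop:generalverticalwarping} into the stated Gram-determinant form. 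The companion identity \eqref{eq:shapenowarped} for the new shape operator, if desired, follows by a one-line application of the Koszul formula to $\widetilde\nabla_{T_1}T_2$ using that $\widetilde g$ and $g$ agree horizontally and that $\nabla\phi$ is horizontal.
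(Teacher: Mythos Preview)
The paper does not supply a proof of Proposition \ref{prop:formulaewarped}: the appendix simply records these formulae as extracted from \cite{gw} (as announced in the introduction), in the same spirit as Proposition \ref{prop:generalverticalwarping} whose ``proof'' is merely a page reference. Your approach---specializing the general vertical warping formulae of Proposition \ref{prop:generalverticalwarping} to the product submersion $g=g_B\times g_F$ by plugging in $A\equiv 0$, $\sigma\equiv 0$, $S\equiv 0$, and the split of $K_g$ into $K_B$, $K_F$, $0$---is correct and is exactly the natural internal derivation; indeed the paper places the warped product subsection immediately after Proposition \ref{prop:generalverticalwarping} precisely because warped products are the special case you describe. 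One small point: your passage from the orthogonal case to the general vertical--vertical formula by ``restoring the Gram determinant'' is slightly informal, since Proposition \ref{prop:generalverticalwarping} is only stated under $g(V_1,V_2)=0$; but this is harmless, because one can always choose an orthogonal basis of the plane, and in any case the paper's own statement of the second formula is itself visibly miswritten (the exponent placement in $(|V_1|^2|V_2|^2-\langle V_1,V_2\rangle)^2$ is a typo for the Gram determinant $|V_1|^2|V_2|^2-\langle V_1,V_2\rangle^2$).
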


\subsection{Canonical deformation}
Another simple case of general vertical warping happens when one takes $\phi(p) = t\in \mathbb{R},~\forall p\in M$. The metric $\widetilde g$ is usually known as the canonical variation of $g$. Let $\widetilde g = g\Big|_{\cal B} + e^{2t}g\Big|_{\cal V}.$

	\begin{proposition}\label{prop:curvatures}
		Let $\pi: F \hookrightarrow (M,g) \to B$ be a Riemannian submersion with totally geodesic fibers. Let $\widetilde K,~K,~K_B,~K_F$ denote the non-reduced sectional curvatures of $\widetilde g,~g_B,~g_F$, respectively, where $\widetilde g$ is the canonical variation  of $g;$ $g_B$ is the submersion metric on $B$, and $g_F$ the metric on $F$. Then, if $X,Y,Z \in \mathcal{H},$ and  $V,W \in \mathcal{V},$
		\begin{enumerate}
			\item $\widetilde K(X,Y) = K_B(\pi_{\ast}X,\pi_{\ast}Y)(1-e^{2t}) + e^{2t}K(X,Y),$\\
			\item $\widetilde K(X,V) = e^{4t}|A^*_XV|^2,$\\
			\item $\widetilde K(V,W) = e^{2t}K(V,W),$\\
			\item $\widetilde R(X,Y,Z,W) = e^{2t}g((\nabla_XA)_YZ,W).$
		\end{enumerate}
	\end{proposition}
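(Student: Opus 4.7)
The plan is to recognize that the canonical variation is the special case of a general vertical warping in which the warping function is the constant $\phi \equiv t$, and then specialize Proposition \ref{prop:generalverticalwarping} using the totally geodesic hypothesis on the fibers. Since $\phi$ is constant we have $d\phi \equiv 0$, $\nabla\phi \equiv 0$, and $\mathrm{Hess}~\phi \equiv 0$; moreover, totally geodesic fibers means that the second fundamental form $\sigma$ of each fiber vanishes identically, equivalently the shape operator $S_X V$ vanishes for every horizontal $X$ and vertical $V$.

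For item (1), substituting $\phi \equiv t$ into the first identity of Proposition \ref{prop:generalverticalwarping} gives directly $\widetilde K(X,Y) = (1-e^{2t})K_B(\pi_*X, \pi_*Y) + e^{2t} K(X,Y)$. For item (3), the same specialization plus $\sigma \equiv 0$ collapses the vertical-vertical formula to $(e^{2t}-e^{4t})K_F(V,W) + e^{4t}K_g(V,W)$, and the Gauss equation for the totally geodesic fiber, $K_F(V,W) = K_g(V,W)$, turns this into $e^{2t} K(V,W)$. For item (2), the mixed formula of Proposition \ref{prop:generalverticalwarping} reduces to $e^{2t} K_g(X,V) - e^{2t}(1-e^{2t})|A^*_XV|^2$; the classical O'Neill identity for submersions with totally geodesic fibers, $K_g(X,V) = |A^*_XV|^2$, then yields $\widetilde K(X,V) = e^{4t}|A^*_XV|^2$.

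Item (4) does not follow immediately from Proposition \ref{prop:generalverticalwarping}, since it concerns a full Riemann tensor component rather than a sectional curvature. The plan here is to exploit that the Levi-Civita connection $\widetilde\nabla$ of the canonical variation agrees with $\nabla$ on horizontal vector fields (the horizontal metric is unchanged), and that the only effect of the rescaling on the pair $(\nabla, A)$ is a uniform factor on vertical inner products. Starting from the O'Neill identity which expresses the vertical component of $R(X,Y)Z$ for horizontal $X,Y,Z$ in terms of $(\nabla_X A)_Y Z$ (with the $T$-type corrections dropped because the fibers are totally geodesic), one tracks how each term transforms: the vector $(\nabla_X A)_Y Z$ is unchanged, but the pairing $\widetilde g(\,\cdot\,,W)$ introduces a factor of $e^{2t}$ because $W$ is vertical and $\widetilde g|_{\mathcal V} = e^{2t} g|_{\mathcal V}$. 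This produces the claimed identity $\widetilde R(X,Y,Z,W) = e^{2t} g((\nabla_X A)_Y Z, W)$.

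The main obstacle is item (4): one must carefully verify that the mixed Riemann tensor component really does scale as a single factor of $e^{2t}$, which requires confirming that all the auxiliary terms involving $T$ and $\sigma$ vanish under the totally geodesic hypothesis and that the new O'Neill tensor $\widetilde A$ satisfies $\widetilde A_X Y = A_X Y$ as a vertical vector (though its norm changes, since $\widetilde g$ scales on $\mathcal V$). Once these reductions are in place, the remaining computation is essentially bookkeeping on the Koszul formula for $\widetilde\nabla - \nabla$, and the full details can be extracted from \cite[Section 2.1.4]{gw}.
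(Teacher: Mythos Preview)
Your proposal is correct and in fact more detailed than what the paper provides: the paper does not prove Proposition~\ref{prop:curvatures} at all but simply records it as one of the formulae ``extracted from \cite{gw} up to errata'' (see the opening of the appendix). Your derivation of items (1)--(3) by specializing Proposition~\ref{prop:generalverticalwarping} to $\phi\equiv t$, together with the Gauss equation $K_F=K_g$ on vertical planes and the O'Neill identity $K_g(X,V)=|A^*_XV|^2$ for totally geodesic fibers, is the natural way to obtain these from the material already in the appendix; your sketch for item (4), ending with a pointer to \cite[Section~2.1.4]{gw}, is consistent with the paper's own stance of deferring these computations to that reference.
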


\bibliographystyle{alpha}
	\bibliography{main}

\end{document}